\newtheorem{thm}{Theorem}
\newtheorem{conj}[thm]{Conjecture}
\newtheorem{lemma}[thm]{Lemma}
\newtheorem{coro}[thm]{Corollary}
\newtheorem{definition}[thm]{Definition}
\newtheorem{prop}[thm]{Proposition}
\newtheorem{example}[thm]{Example}
\newtheorem{Remark}[thm]{Remark}
\newtheorem{fakethm}{Theorem}
\newtheorem*{fakelemma}{Lemma}
\numberwithin{thm}{section} 
\numberwithin{equation}{section} 
\numberwithin{figure}{section} 
\newcommand*{\R}{\mathbb{R}}
\newcommand*{\C}{\mathbb{C}}
\newcommand*{\M}{\mathbb{M}}
\renewcommand*{\S}{S}
\newcommand*{\g}{\mathfrak{g}}
\newcommand*{\h}{\mathfrak{h}}
\renewcommand*{\l}{\lambda}
\newcommand*{\mf}{\mathfrak}
\newcommand*{\mc}{\mathcal}
\newcommand*{\bb}{\mathbb}
\newcommand*{\del}{\partial}
\newcommand*{\delbar}{\bar\partial}
\newcommand*{\cotang}{T^*\mc{T}^n(S)}
\newcommand*{\T}{\mc{T}}
\DeclareMathOperator{\tr}{tr}
\DeclareMathOperator{\Hilb}{Hilb}
\DeclareMathOperator{\Ham}{Ham}
\DeclareMathOperator{\diag}{diag}
\DeclareMathOperator{\Hom}{Hom}
\DeclareMathOperator{\Span}{Span}
\DeclareMathOperator{\PSL}{PSL}
\DeclareMathOperator{\SL}{SL}
\DeclareMathOperator{\GL}{GL}
\DeclareMathOperator{\Lie}{Lie}
\begin{document}
\author{Alexander Thomas}
\title{Higher Complex Structures and Flat Connections}
\address{Université Claude Bernard Lyon 1, Institut Camille Jordan, 21 bv Claude Bernard, 69100 Villeurbanne}
\email{athomas@math.univ-lyon1.fr}

\begin{abstract}
In the physics literature, Bilal--Fock--Kogan \cite{BFK} introduced the idea of parabolic reduced flat connections on a surface to give a geometric origin to $W$-algebras. In this paper, we combine these ideas with higher complex structures, geometric structures defined by Fock and the author in \cite{FockThomas}. A semiclassical analysis of the parabolic reduction establishes a direct link between flat connections and higher complex structures.

In particular, we study a certain class of connections on a bundle equipped with a line subbundle $L$, which we call $L$-parabolic. The curvature of these connections is of rank at most 1. We describe a certain family of $L$-parabolic connections with vanishing curvature, giving the data of a higher complex structure and a cotangent variation. 
Infinitesimal higher diffeomorphisms, the natural class of transformations on higher complex structures, are realized by the infinitesimal gauge transformation induced by changing $L$. Constructing flat families of connections of this kind is linked to Toda integrable systems.
\end{abstract}

\maketitle

\setcounter{tocdepth}{1}
\tableofcontents

\section{Introduction}

In \cite{Hit.1}, Nigel Hitchin describes, for a Riemann surface $\Sigma$ with underlying smooth surface $S$, a connected component of the \textbf{character variety}
$$\mathcal{X}(S, \PSL_n(\R)) := \Hom(\pi_1 S, \PSL_n(\R))/\PSL_n(\R),$$
which he parametrizes by holomorphic differentials. These components are called \textbf{Hitchin components}. 
His approach is complex analytic and uses the non-abelian Hodge correspondance \cite{Corlette, Donaldson, Simpson, Hit.2}, which in turn is closely linked to the hyperkähler structure of the moduli space of polystable Higgs bundles. Elements of these components can also be described as representations $\pi_1(S)\to \PSL_n(\R)$ which can be continously deformed to a representation factoring through a fuchsian representation $\pi_1(S)\to\PSL_2(\R)$. All these representations are discrete and faithful \cite{Lab,FG}.

For $\PSL_2(\R)$, the Hitchin component is the \textbf{Teichmüller space}, which is the moduli space of various geometric structures on the underlying smooth surface $\S$, for example complex structures or hyperbolic structures, considered up to isotopies. Thus, the question naturally arises \textit{whether there is a geometric structure on $\S$ whose moduli space gives Hitchin's component for higher rank}. This question already appears in Hitchin's original paper \cite{Hit.1}.

A vast body of literature is dedicated to this question. To mention only a few, there is the work of Choi--Goldman \cite{CG} on convex projective structures, the work of Guichard--Wienhard \cite{GW} on special projective structures, and the crucial notion of Anosov representations introduced by Labourie \cite{Lab}.

\medskip
In \cite{FockThomas} another candidate for such a geometric structure is constructed, called the \textbf{higher complex structure}, a generalization of the complex structure. A higher complex structure of order $n$ can be thought of as an $(n-1)$-jet of a complex curve inside $T^{*\C}S$ along the zero-section. Taking the underlying 1-jet induces a complex structure on $S$.

Higher complex structures are considered up to the action of Hamiltonian diffeomorphisms of $T^*S$ preserving the zero-section, called \textbf{higher diffeomorphisms}. The moduli space $\T^n(S)$ of higher complex structures of order $n$ shares multiple properties with the Hitchin component, in particular both are contractible and of same dimension \cite{FockThomas, Nolte}.

The main open question about higher complex structures is whether there is a canonical diffeomorphism between $\T^n(S)$ and the $\mathrm{PSL}_n(\R)$-Hitchin component. More generally, one might expect a canonical diffeomorphism between a tubular neighborhood of $\T^n(S)\subset T^*\T^n(S)$ and a tubular neighborhood of the Hitchin component inside the complex character variety $\mathcal{X}(S,\PSL_n(\C))$. This would generalize work of Donaldson \cite{Donaldson}, surveyed by Trautwein \cite{Trautwein}, on almost-Fuchsian representations in the case $n=2$.

\medskip
Our main inspiration for this paper comes from ideas which have been described by Bilal--Fock--Kogan in \cite{BFK}, treating about the geometric origins of $W$-algebras. There, a certain class of flat connections is considered, obtained as the Hamiltonian reduction with respect to a parabolic subgroup of the gauge group. This reduction makes appear a special class of tensors, encoding some sort of generalized complex and projective structures. These tensors fulfil a compatibility condition, coming from the flatness of the connection.

Higher complex structures and their cotangent variations can be seen as the mathematical rigorous formulation of these geometric structures. The geometric description of the transformations of the tensors associated to generalized complex and projective structures, which remained unsettled in \cite{BFK}, is given in terms of higher diffeormophisms.

In this paper, we use a semiclassical limit to deduce higher complex structures and their cotangent variations from a special flat family of connections and find a gauge-theoretic description of the infinitesimal higher diffeomorphism action.

\bigskip
\noindent \textbf{\textit{Structure and results.}}
In this paper, we study the links between higher complex structures together with a cotangent vector (i.e. a point in $T^*\T^n(S)$) and flat connections.

More precisely, we show that a higher complex structure on $S$ induces a complex bundle $V$ together with a holomorphic line subbundle $L$ isomorphic to $K^{(n-1)/2}$ (where we use the induced complex structure on $S$ and $K$ denotes the canonical bundle), and a special field $\Phi\in\Omega^1(S,\mathrm{End}(V))$, see Section \ref{indbundle}.

We then study in Section \ref{parabolicreduction} the symplectic reduction of the space of all connections on $V$ by the group of gauge transformations preserving $L$. A representative of such connections is called $\mathbf{L}$\textbf{-parabolic}. An $L$-parabolic connection is almost flat (its curvature is of rank at most 1) and generically allows a standard representative in its gauge class, called the $\mathbf{L}$\textbf{-parabolic gauge}. We give local coordinates of the moduli space of $L$-parabolic connections, see Proposition \ref{Prop:para-L-para-conns}. Changing $L$ induces a change of the $L$-parabolic gauge which we compute explicitly.

In the core of the paper, Section \ref{parabolicwithlambda}, we consider a family of $L$-parabolic connections on $V$ with parameter $\l\in\C^*$ of the form 
\begin{equation}\label{Eq:C-intro}
C(\l)=\l\Phi+d_A+\l^{-1}\Psi,
\end{equation}
where $\Phi$ comes from a higher complex structure, $d_A$ is a connection satisfying $d_A(\Phi)=0$, and $\Psi\in\Omega^1(S,\mathrm{End}(V))$. An interesting example is when $C(\l)$ is flat for all $\l$.
We show existence of the $L$-parabolic gauge, in which we can locally parametrize $C(\l)$ by functions $(\bm\hat{\mu}_k(\l),\bm\hat{t}_k(\l))_{2\leq k\leq n}$. The highest order terms of these parameters, in the Taylor expansion for $\l\to\infty$, are tensors $(\mu_k,t_k)_{2\leq k\leq n}$. The first main result of the paper shows that these tensors encode the data of a higher complex structure with a cotangent variation. The $\mu_k$ describe the higher complex structure, whereas the $t_k$ describe the covector. In $T^*\T^n(S)$, the covector has to satisfy a certain compatibility condition. We say that the covector is $\boldsymbol{\mu}$\textbf{-holomorphic}. 

\begin{fakethm}[Theorem \ref{conditioncinconnection}]
If the family $C(\l)$ from Equation \eqref{Eq:C-intro} is flat, then the covector $(t_2,...,t_n)$ is $\mu$-holomorphic.
\end{fakethm} 

The proof of the theorem relies on semiclassical analysis when $\l$ goes to infinity. When $C(\l)$ is flat, the functions $(\bm\hat{\mu}_k(\l),\bm\hat{t}_k(\l))_{2\leq k\leq n}$ satisfy a compatibility constraint. The highest order term in the Taylor expansion of this compatibility constraint turns out to be the condition for the covector to be $\mu$-holomorphic.

The $L$-parabolic gauge we used to represent $C(\l)$ can be altered by changing the line subbundle $L$. The second main result shows that this induces a gauge transformation which reproduces the infinitesimal higher diffeomorphism action on the tensors $(\mu_k,t_k)_{2\leq k\leq n}$. This gives a first hint towards a gauge-theoretic implementation of the higher diffeomorphism action. Another viewpoint, also at the infinitesimal level, is given in \cite[Thm F]{Fock-bundles}.

\begin{fakethm}[Theorem \ref{actionsymponlambdaconn}]
Let $C(\l)$ be a family of flat connections as above. Then, the infinitesimal gauge action, induced by changing the line subbundle $L$, on the highest terms $(\mu_k,t_k)_{2\leq k\leq n}$ of the parameters $(\bm\hat{\mu}_k(\l),\bm\hat{t}_k(\l))_{2\leq k\leq n}$ of $C(\l)$, is the same as the infinitesimal higher diffeomorphism action on the higher complex structure with cotangent vector described by $(\mu_k,t_k)_{2\leq k\leq n}$.
\end{fakethm}

The proof relies on the fact that the semiclassical limit of the bracket of differential operators is the Poisson bracket of the associated symbols.

In the last part, Section \ref{finalstep}, we analyze how to construct $L$-parabolic connections $C(\l)$ with some reality constraint from higher complex structures and a cotangent vector. The reality constraint implies that the family $C(\l)$ is flat (see Proposition \ref{Prop:flat-para-equiv}).
In the special case where the higher complex structure is trivial and the covector is zero, the flatness equations reduce to the $\mathfrak{sl}_n$-Toda integrable system. In the general case, we argue that the flatness equations can be seen as a generalized Toda system, using the loop algebra $\mathcal{L}(\mathfrak{sl}_n)=\mathfrak{sl}_n\otimes \C[\l,\l^{-1}]$. For $\mathfrak{sl}_2$, this generalized Toda system reduces to the cosh-Gordon equation, studied in \cite{Fock}.

In Appendix \ref{appendix:C}, we prove a technical lemma from the main text.

\bigskip
\noindent \textbf{\textit{Motivation.}}
We describe a conjectural broader picture, summarized in Figure \ref{HK}, which greatly motivated the paper.

Hitchin's original approach in \cite{Hit.1} to construct connected components in real character varieties uses the hyperkähler structure of the moduli space of Higgs bundles $\mc{M}_H(\Sigma)$. The left hand side of Figure \ref{HK} shows the twistor space of $\mathcal{M}_H(\Sigma)$ (a 1-parameter family of Kähler manifolds parametrized by $\C P^1$), together with the Hitchin fibration. The general theory of twistor spaces shows that all fibers of the twistor space are diffeomorphic \cite{HKLR}. For $\mathcal{M}_H(\Sigma)$, this gives the \emph{non-abelian Hodge correspondence} \cite{Corlette, Donaldson2, Hit.2, Simpson}.

\begin{figure}[h!]
\centering
\begin{tikzpicture}[scale=1.5]
	\draw (0,0) circle (1cm);
\draw [domain=180:360] plot ({cos(\x)},{sin(\x)/3});
	\draw [domain=0:180, dotted] plot ({cos(\x)},{sin(\x)/3});
	\draw [fill=white] (0,1) circle (0.04);
	\draw [fill=white] (0,-1) circle (0.04);
	\draw (0,1.2) node {$\overline{\mc{M}}_H(\Sigma)$};
	\draw[below] (0,1) node {$0$};
	\draw (0,-1.2) node {$\mc{M}_H(\Sigma)$};
	\draw[above] (0,-1) node {$\infty$};
	\draw[->] (0,-1.4)--(0,-1.75);
	\draw (0,-2) node {$\bigoplus_{i=2}^n \mathrm{H}^0(\Sigma,K^i)$};
	\draw [domain=-28:32, dashed, ->] plot ({0.33*cos(\x)}, {0.33*sin(\x)-1.5});
	
	\draw (1,-1.45) node {Hitchin};
	\draw (1,-1.67) node {section};
	\draw (-1.25,-1.45) node {Hitchin};
	\draw (-1.25,-1.7) node {fibration};
	\draw (1.85,0) node {$\mathcal{X}(\S, \mathrm{SL}_n(\C))$};
	
	\begin{scope}[xshift=4.6cm]
	\draw (0,0) circle (1cm);
	\draw [domain=180:360] plot ({cos(\x)},{sin(\x)/3});
	\draw [domain=0:180, dotted] plot ({cos(\x)},{sin(\x)/3});
	\draw [fill=white] (0,1) circle (0.04);
	\draw [fill=white] (0,-1) circle (0.04);
	\draw (0,1.2) node {$\overline{U}$};
	\draw[below] (0,1) node {$0$};
	\draw (0,-1.2) node {$U \subset \cotang$};
	\draw[above] (0,-1) node {$\infty$};
	\draw[->] (0,-1.4)--(0,-1.75);
	\draw (0,-2) node {$\T^n(S)$};
	\draw [domain=-40:24, dashed, ->] plot ({0.33*cos(\x)}, {0.33*sin(\x)-1.5});
	
	\draw (1,-1.45) node {zero-};
	\draw (1,-1.67) node {section};
	\draw (-1.25,-1.45) node {canonical};
	\draw (-1.25,-1.7) node {projection};
	\draw (2.14,0) node {$U'\subset\mathcal{X}(\S, \mathrm{SL}_n(\C))$};
	\end{scope}
	\end{tikzpicture}
	\caption{Twistor space for moduli space of Higgs bundles and $U\subset \cotang$}\label{HK}
\end{figure}

Optimistically, there is a similar twistor space for a tubular neighborhood $U$ of $\T^n(S)\subset\cotang$, diffeomorphic to a neighborhood $U'$ of the Hitchin component inside the complex character variety. The role of the Hitchin fibration is simply the projection map onto $\T^n(S)$.

\bigskip
\noindent \textbf{\textit{Notations.}} Throughout the paper, $S$ denotes a smooth oriented closed surface of genus $g \geq 2$. When $S$ is equipped with a complex structure, we call the associated Riemann surface $\Sigma$. A complex local coordinate system on $\Sigma$ is denoted by $(z, \bar{z})$. Linear coordinates on $T^{*\mathbb{C}}\Sigma$ are denoted by $(p,\bar{p})$. The canonical bundle is $K=T^{*(1,0)}\Sigma$ and we use the shorthand notation $K^i=K^{\otimes i}$. The space of sections of a bundle $B$ is denoted by $\Gamma (B)$. The Hamiltonian reduction (or symplectic reduction, or Marsden-Weinstein quotient) of a symplectic manifold $X$ by a group $G$ is denoted by $X//G$, where the reduction is over the zero-coadjoint orbit. The equivalence class of some element $a$ is denoted by $[a]$.

\bigskip
\noindent \textbf{\textit{Acknowledgments.}}
I warmly thank Vladimir Fock, Georgios Kydonakis, Alex Nolte and Charlie Reid for fruitful discussions and advice. I also thank the reviewers for many helpful comments. A previous version of the paper is part of my PhD thesis accomplished at the University of Strasbourg. For the new version, I was supported by the Deutsche Forschungsgemeinschaft (Project-ID 281071066 - TRR 191). The new version is a complete rewriting, clarifying and rectifying statements and including more general results.

\section{Higher complex structures}\label{highercomplex}

The goal of this section is twofold: first we give a summary of \cite{FockThomas}, in particular the construction of higher complex structures, their moduli space and the cotangent bundle to its moduli space. 
Second, we present some new material, especially a bundle with several extra structures induced by the higher complex structure. This bundle is crucial in the subsequent sections.

\subsection{Higher complex structures}

Given a smooth surface $S$ equipped with some reference complex structure, another complex structure on $S$ can be described by the \textbf{Beltrami differential} $\mu\in\Gamma(K^{-1}\otimes \bar{K})$ where $K$ denotes the canonical bundle of the reference complex structure. It determines the notion of a local holomorphic function $f$ by the condition $(\delbar-\mu\del)f=0$, where we use $\partial=\tfrac{\partial}{\partial z}$ and $\bar\partial=\tfrac{\partial}{\partial \bar{z}}$. The Beltrami differential determines a linear direction in $T^{\C}\S$, the direction generated by the vector $\delbar-\mu\del$. Since $\delbar-\mu\del$ and $\del-\bar{\mu}\delbar$ have to be linearly independent, we get the condition $\mu\bar{\mu}\neq 1$.
Equivalently, we can use the cotangent bundle $T^*\S$, and say that the complex structure is \emph{entirely encoded in a nowhere real section of $\bb{P}(T^{*\C}\S)$}.

The idea of higher complex structures is to replace the linear direction by a polynomial direction, or more precisely an $n$-jet of a curve inside $T^{*\C}\S$ along the zero-section.
To get a precise definition, we use the \textbf{punctual Hilbert scheme} of the plane, denoted by $\Hilb^n(\C^2)$ which is defined by
$$\Hilb^n(\C^2)=\{I \text{ ideal of } \C[x,y] \mid \dim \C[x,y]/I = n\}.$$
A generic point in $\Hilb^n(\C^2)$ is an ideal whose associated algebraic variety is a collection of $n$ distinct points in $\C^2$. A generic ideal can be written as $$\langle-x^n+t_1x^{n-1}+...+t_n, -y+\mu_1+\mu_2x+...+\mu_nx^{n-1} \rangle.$$ Moving around in $\Hilb^n(\C^2)$ corresponds to a movement of $n$ particles in $\C^2$. But whenever $k$ particles collide the Hilbert scheme retains an extra information: the $(k-1)$-jet of the curve along which the points entered into collision. The \textbf{zero-fiber}, denoted by $\Hilb^n_0(\C^2)$, consists of those ideals whose support is the origin. A generic point in $\Hilb^n_0(\C^2)$ is of the form $$\langle x^n, -y+\mu_2x+\mu_3x^2+...+\mu_nx^{n-1}\rangle$$
which can be interpreted as an $(n-1)$-jet of a curve at the origin. For more details on the punctual Hilbert scheme, we refer to \cite[Appendix A]{g-complex}.

We can now give the definition of a higher complex structure:

\begin{definition}[Def.2 in \cite{FockThomas}]
 A \textbf{higher complex structure} of order $n$ on a surface $S$ is a section $I$ of $\Hilb^n_0(T^{*\mathbb{C}}S)$ such that at each point $z\in \S$ we have $I(z)+\bar{I}(z)=\langle p, \bar{p} \rangle$, the maximal ideal supported at the origin of $T_z^{*\mathbb{C}}S$. 
The space of all higher complex structures of order $n$ on $S$ is denoted by $\M^n(S)$.
\end{definition}

Notice that we apply the punctual Hilbert scheme pointwise, giving a  bundle over $S$ with fiber $\Hilb^n_0(T_z^{*\C}S)$ over $z\in S$.
The condition on $I+\bar{I}$ ensures that $I$ is a generic ideal, so locally it can be written as
\begin{equation}\label{I-expr}
I(z)=\langle p^n, -\bar{p}+\mu_2(z, \bar{z})p+\mu_3(z, \bar{z}) p^2...+\mu_n(z, \bar{z})p^{n-1}\rangle.
\end{equation}
Here $(p,\bar{p})$ denote linear coordinates on $T^{*\C}S$, which can be identified with $(\partial,\bar\partial)$ (seen as elements in the bidual). The coefficients $\mu_k$ are called \textbf{higher Beltrami differentials}. A direct computation shows that $\mu_k$ is a tensor of type $(1-k,1)$, i.e. $\mu_k \in \Gamma(K^{1-k}\otimes \bar{K})$. The coefficient $\mu_2$ is the usual Beltrami differential. In particular for $n=2$ we get the usual complex structure. Forgetting all $\mu_k$ with $k\geq 3$, we get:
\begin{prop}
A higher complex structure induces a complex structure on $S$.
\end{prop}
We will often use this induced complex structure and consider the associated Riemann surface $\Sigma$. The induced complex coordinates are characterized by $\mu_2(z,\bar{z})=0$.

The punctual Hilbert scheme admits an equivalent description as a space of pairs of commuting operators. To an ideal $I$ of $\C[x,y]$ of codimension $n$, one can associate the multiplication operators by $x$ and by $y$ in the quotient $\C[x,y]/I$, denoted by $M_x$ and $M_y$. This gives a pair of commuting operators. Conversely, to two commuting operators $(A,B)$ we can associate the ideal $I(A,B)=\{P\in\C[x,y] \mid P(A,B)=0\}$. 
The zero-fiber $\Hilb^n_0(\C^2)$ corresponds to nilpotent commuting operators.

From this point of view, a higher complex structure is locally a gauge class of special matrix-valued 1-forms of the form $\Phi_1dz+\Phi_2d\bar{z}$ where $(\Phi_1, \Phi_2)$ is a pair of commuting nilpotent matrices with $\Phi_1$ principal nilpotent (which means of maximal rank $n-1$).

\medskip
\noindent To define a finite-dimensional moduli space of higher complex structures, we have to define some equivalence relation. It turns out that the good notion is the following:
\begin{definition}[Def.3 in \cite{FockThomas}]
A \textbf{higher diffeomorphism} of a surface $S$ is a Hamiltonian diffeomorphism of $T^*S$ preserving the zero-section $S \subset T^*S$ setwise at all times. The group of higher diffeomorphisms is denoted by $\Ham_0(T^*S)$.
\end{definition}
Symplectomorphisms act on $T^{*\C}S$, so also on 1-forms. From this, one can heuristically expect an action of higher diffeomorphisms on a higher complex structure, considered as the limit of an $n$-tuple of 1-forms. This action has been described in \cite{Nolte} using a jet perspective.
We define the \textbf{moduli space of higher complex structures}, denoted by $\T^n(S)$, as the space of higher complex structures $\mathbb{M}^n(S)$ modulo higher diffeomorphisms.
The main properties are given in the following theorem:
\begin{thm}[Theorem 2 in \cite{FockThomas}, Theorem 1.1 in \cite{Nolte}]\label{mainresultncomplex}
For a surface $\S$ of genus $g\geq 2$ the moduli space $\T^n(\S)=\M^n(\S)/\Ham_0(T^*\S)$ is a contractible manifold of complex dimension $(n^2-1)(g-1)$. The cotangent space at any point $\mu=[(\mu_2,...,\mu_n)]$ is given by 
$$T^*_{\mu}\mathcal{T}^n(S) = \bigoplus_{m=2}^{n} \mathrm{H}^0(\Sigma,K^m),$$
where $\Sigma$ is the Riemann surface given by the point in Teichmüller space. Finally, there is a copy of Teichmüller space $\mc{T}^2(\S)\rightarrow \T^n(\S)$ and a forgetful map $\mathcal{T}^n(\S) \rightarrow \mathcal{T}^{n-1}(\S)$.
\end{thm}
The forgetful map in coordinates is just given by forgetting the last Beltrami differential $\mu_n$. The copy of Teichmüller space is given by $\mu_3=...=\mu_n=0$ (this relation is unchanged under higher diffeomorphisms).


\subsection{Cotangent bundle of higher complex structures}\label{cotangs}

In our subsequent study of flat connections, higher complex structures will appear together with a cotangent vector satisfying a compatibility condition, $\mu$-holomorphicity. This can be best understood by studying the total cotangent bundle $\cotang$. We present here heuristic arguments, allowing the reader to get the broad idea. The detailed proofs use different arguments and can be found in \cite{FockThomas, Nolte}.

The punctual Hilbert scheme inherits a complex symplectic structure from $\C^2$. The zero-fiber $\Hilb^n_0(\C^2)$ is Lagrangian in $\Hilb^n_{red}(\C^2)$, the \textbf{reduced Hilbert scheme}, which consists by definition of those ideals $I$ whose support has barycenter the origin (generically $n$ points with barycenter equal to the origin).
Using the symplectic form, we then get $$T^*\Hilb^n_0(\C^2)\cong T^{\perp}\Hilb^n_{0}(\C^2) \approx \Hilb^n_{red}(\C^2),$$ where $T^{\perp}$ denotes the normal bundle. Near the zero-section, to first order, the normal bundle can be identified with the whole space, here the reduced Hilbert scheme.

The cotangent bundle to a quotient space $X/G$, where $X$ is a manifold and $G$ a Lie group, is the Hamiltonian reduction: $T^*(X/G) \cong T^*X//G$. Using this, we can heuristically compute
\begin{align}
\cotang &= T^*\left(\Gamma(\Hilb^n_0(T^{*\mathbb{C}}S))/\Ham_0(T^*S)\right)  \nonumber \\
&= \Gamma(T^*\Hilb^n_0(T^{*\mathbb{C}}S)) // \Ham_0(T^*S)  \nonumber \\
&= \Gamma(T^{normal}\Hilb^n_0(T^{*\mathbb{C}}S))// \Ham_0(T^*S)  \nonumber \\
&= \Gamma(\Hilb^n_{red}(T^{*\mathbb{C}}S)) // \Ham_0(T^*S) \; \mod t^2. \label{hkquotientofmodulispace}
\end{align}

\begin{Remark}
We stress again that the argument above is only heuristic. In \cite{Nolte}, Nolte shows that the higher diffeomorphism action is not proper. Hence the symplectic reduction cannot be carried out by the standard techniques.
\end{Remark}

An element of $\Gamma(\Hilb^n_{red}(T^{*\C}\S))$ is an ideal locally of the form 
\begin{equation}\label{ideal-I}
I=\langle p^n-t_2p^{n-2}-...-t_n, -\bar{p}+\mu_1+\mu_2p+...+\mu_np^{n-1}\rangle,
\end{equation}
where $(t_k, \mu_k)_{2\leq k \leq n}$ are complex-valued functions, which can serve as coordinates. Globally, the $t_k$ are tensors of type $(k,0)$.
The coefficient $\mu_1$ is an explicit function of the other variables using that the barycenter is the origin. The fact that the normal bundle is the total space to order 1 is expressed by ``mod $t^2$'', meaning that all quadratic or higher terms in the $t_k$ have to be dropped.

To compute the moment map, we have to understand with more detail the action of higher diffeomorphisms on $\Gamma(\Hilb^n_{red}(T^{*\mathbb{C}}S))$. The ideal $I$ has two generators which we put into the form $p^n-P(p)$ and $-\bar{p}+Q(p)$ where $P(p)=t_2p^{n-2}+...+t_n$ and $Q(p)=\mu_2p+...+\mu_np^{n-1}$. A higher diffeomorphism generated by some Hamiltonian $H$ acts on $I$ by changing the two polynomials. Their infinitesimal variations $\delta P$ and $\delta Q$ are given by (see \cite{FockThomas} after Proposition 2)
\begin{align}
\delta P &= \{H, p^n-P(p)\} \mod I \nonumber \\
\delta Q &= \{H, -\bar{p}+Q(p)\} \mod I. \label{idealvariation}
\end{align}

\begin{Remark}
One can easily show that only the class $H \mod I$ acts, which allows to restrict attention to $H(p,z,\bar{z})$, polynomial in $p$ of degree at most $n-1$.
\end{Remark}

Using these variation formulas, one can compute the moment map:
\begin{thm}[Theorem 3 in \cite{FockThomas}]\label{conditionC}
The cotangent bundle to the moduli space of higher complex structures $\cotang$ is given by $\Ham_0(T^*\S)$-equivalence classes $[(\mu_k,t_k)]_{2\leq k\leq n}$, where for all $k=2,...,n$ we have $\mu_k \in \Gamma(K^{1-k}\otimes \bar{K})$ and $t_k \in \Gamma(K^k)$ satisfying
\begin{equation}\label{Eq:mu-holomorphic}
(-\bar{\partial}\!+\!\mu_2\partial\!+\!k\partial\mu_2)t_{k}+\sum_{\ell=1}^{n-k}((\ell\!+\!k)\partial\mu_{\ell+2}+(\ell\!+\!1)\mu_{\ell+2}\partial)t_{k+\ell}=0.
\end{equation}
\end{thm}
We call the moment map \boldsymbol{$\mu$}\textbf{-holomorphicity term} and say that the covector $(t_2,...,t_n)$ is $\boldsymbol{\mu}$\textbf{-holomorphic}. It generalizes the usual notion of being holomorphic $\bar\partial t_k=0$, to which it reduces in the special case when $\mu_\ell=0$ for all $\ell$.

\subsection{Induced bundle}\label{indbundle}

To any section $I$ of $\Hilb^n(T^{*\C}\S)$, we can canonically associate a vector bundle $V'$ of rank $n$ over $\S$ whose fiber over a point $z\in S$ is $\mathbb{C}[p,\bar{p}]/I(z)$. Consider $\mathrm{Sym}(T^{\C}\S)$, the space of functions on $T^{*\C}\S$ which are polynomial in each fiber. The bundle $V'$ is the quotient of $\mathrm{Sym}(T^{\C}\S)$ by $I$. The image of the constant functions under the canonical projection $\mathrm{Sym}(T^{\C}\S)\to V'$ gives a trivial complex line subbundle $L'\subset V'$.

We restrict now to the case where $I$ is a higher complex structure. We then get a complex structure on $S$ with local complex coordinates $(z,\bar{z})$. 
Finally, we get a field $\Phi \in \Gamma(\mathfrak{gl}(V')\otimes T^{*\C}\S)$, i.e. an $\mf{gl}_n$-valued 1-form locally of the form $\Phi = \Phi_1dz+\Phi_2d\bar{z}$, where $(\Phi_1,\Phi_2)$ is a pair of commuting nilpotent matrices. $\Phi_1$ acts on $V'$ as multiplication by $p$ and $\Phi_2$ as multiplication by $\bar{p}$. At a point $z\in\S$, any non-zero vector of $L'(z)$ is a cyclic vector for $\Phi_1$.

Consider a nowhere vanishing section $s$ of $L'$, which exists since $L'$ is trivial. On a local chart, we get a basis $B=(s,\Phi_1 s,\Phi_1^2s,...,\Phi_1^{n-1}s)$, applying successively $\Phi_1$ to $s$. We can also write $B$ as $(s,ps,p^2s,...,p^{n-1}s)$ since $\Phi_1$ is multiplication by $p$. In this basis, we recover the local structure of the higher complex structure $I$:
$$I=\langle p^n, -\bar{p}+\mu_2 p+\mu_3p^2+...+\mu_np^{n-1} \rangle.$$
Using the local basis $B$, we can give $V'$ a holomorphic structure. Indeed, under a holomorphic change of coordinates $z\mapsto w(z)$, the basis element $p^ks$ transforms into $(\frac{dw}{dz})^kp^ks$ which serves as holomorphic transition map. As holomorphic bundle, we have $V'\cong\mathcal{O}\oplus K^{-1}\oplus K^{-2}\oplus...\oplus K^{-(n-1)}$, where $\mathcal{O}$ denotes the trivial line bundle.
To get a vector bundle of degree zero, we fix a square root of the canonical bundle $K^{1/2}$ (i.e. a spin structure on $\S$) and we define $$V := V'\otimes K^{(n-1)/2} = K^{(n-1)/2}\oplus K^{(n-3)/2}\oplus...\oplus K^{-(n-1)/2}.$$
We also define $L:=L'\otimes K^{(n-1)/2}$. The 1-form $\Phi$ still acts on $V$, acting trivially on $K^{(n-1)/2}$. Now we can globally write $\Phi^{1,0}$ as principal nilpotent matrix consisting of one Jordan block and $\Phi^{0,1}$ as given by the higher Beltrami differentials:

\begin{equation}\label{Eq:matrix-Phi}
\Phi^{1,0} = \begin{pmatrix}   0&&&&  \\ 1&0  &&& \\ &1 &\ddots  && \\ &&\ddots &0& \\ &&&1&0\end{pmatrix} \;\text{ and } \;\; \Phi^{0,1} = \begin{pmatrix}   0&&&&  \\ \mu_2 &0  &&& \\ \mu_3& \mu_2 &\ddots  && \\ \vdots & \ddots &\ddots &0& \\ \mu_n&\cdots&\mu_3&\mu_2&0\end{pmatrix}.
\end{equation}
Here, empty entries are all zero and the constants 1 have to be interpreted as the canonical section of $\Hom(K^{(n-i)/2},K^{(n-i-2)/2}\otimes K) \cong \mathcal{O}$. We also recover $\mu_k\in \Gamma(K^{1-k}\otimes \bar{K})$.

Finally, the higher complex structure $I$ induces a filtration on $V$, i.e. a complete flag in each fiber. Put $I_k=I+\langle p,\bar{p} \rangle^k$, so we have $I_0=\C[p, \bar{p}] \supset I_1 \supset ...\supset I_n = I$. Define $$F_{n-k}=\ker (\mathbb{C}[p,\bar{p}]/I \rightarrow \C[p, \bar{p}]/I_k).$$ Then, $F_1\subset F_2\subset ...\subset F_n$ form an increasing complete flag with $\dim F_k=k$. Equivalently, we have $F_k(z)=\ker \Phi(v)^k$, where $v$ is any non-zero real tangent vector $v\in T_z\S$. From the definition of a higher complex structure, one can easily check that this is independent of the vector field $v$. In the local basis $B$, we have $F_k=\Span(p^{n-k}s,...,p^{n-1}s)$.

\begin{prop}
The filtration on $V$ induced by a higher complex structure is preserved under higher diffeomorphisms.
\end{prop}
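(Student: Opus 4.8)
The plan is to give a description of the flag $F_\bullet$ that makes no reference to the local basis $B$, and then to observe that the defining data of a higher diffeomorphism forces this description to be preserved. Writing $\mf{m}=\langle p,\bar p\rangle$ for the maximal ideal of the origin of the cotangent fibre, the fibre $V'_z=\C[p,\bar p]/I(z)$ is a module on which $\mf{m}$ acts by multiplication, and I claim the flag is precisely the $\mf{m}$-adic filtration, namely $F_k=\mf{m}^{\,n-k}\cdot V'_z$, the image of $\mf{m}^{\,n-k}$ in $\C[p,\bar p]/I(z)$. This matches the local description: modulo $I$ one has $\bar p\equiv\mu_2 p+\dots+\mu_n p^{n-1}$, so $\mf{m}^{\,n-k}V'_z=\Span(p^{n-k}s,\dots,p^{n-1}s)=F_k$, and the dimension count $\dim\mf{m}^{\,n-k}V'_z=k$ agrees. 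Since $V=V'\otimes K^{(n-1)/2}$ differs from $V'$ only by a fibrewise line-bundle twist, which does not affect any subspace, it is enough to show that higher diffeomorphisms preserve this $\mf{m}$-adic filtration on $V'$.

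With this reformulation the statement becomes almost tautological. The ideal $\mf{m}=\langle p,\bar p\rangle$ is intrinsic to the zero-section: it is the ideal of functions vanishing at the origin of the fibre, equivalently (working with fibrewise-polynomial functions) the ideal of the zero-section $\S\subset T^*\S$. By definition a higher diffeomorphism $\phi\in\Symp_0(T^*\S)$ preserves the zero-section setwise, so $\phi$ fixes the origin of each fibre, sending $0_z$ to $0_{\phi_0(z)}$ for the induced diffeomorphism $\phi_0$ of $\S$; the ring homomorphism $\phi^*$ therefore satisfies $\phi^*\mf{m}=\mf{m}$ and hence $\phi^*\mf{m}^{\,j}=\mf{m}^{\,j}$ for every $j$. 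The action of $\phi$ on the higher complex structure sends $I$ to $\phi\cdot I$ and simultaneously induces the fibre isomorphism $\Psi\colon V'_{\phi\cdot I}\to V'_I$ coming from $\phi^*$ on the quotient rings. As $\Psi$ is induced by a ring map respecting each $\mf{m}^{\,j}$, it carries $\mf{m}^{\,n-k}V'_{\phi\cdot I}$ onto $\mf{m}^{\,n-k}V'_I$, that is, it sends the flag of $\phi\cdot I$ to the flag of $I$; this is exactly the assertion that the flag is preserved.

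The only genuine work is to make precise the fibre identification $\Psi$ and the claim that it is induced by $\phi^*$, the subtlety being that $\phi$ is a symplectomorphism of the total space and so mixes base and fibre directions rather than acting fibrewise-linearly. I expect this to be the main obstacle, and I would settle it infinitesimally, which simultaneously serves as a check. A Hamiltonian $H$ generates a higher diffeomorphism exactly when its flow is tangent to the zero-section, which forces $\partial_z H=\partial_{\bar z}H=0$ along $\S$ and hence $H\in\mf{m}$ after normalising the additive constant. The induced infinitesimal variation on representatives is $f\mapsto\{H,f\}\bmod I$, and the key estimate is a degree count for the Poisson bracket in the $\mf{m}$-adic filtration: because the symplectic form pairs a fibre coordinate with a base coordinate, one has $\{\mf{m}^{\,a},\mf{m}^{\,b}\}\subseteq\mf{m}^{\,a+b-1}$, so $H\in\mf{m}$ gives $\{H,\mf{m}^{\,j}\}\subseteq\mf{m}^{\,j}$ for all $j$. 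Thus the infinitesimal action maps each $F_k$ into itself, and integrating the Hamiltonian flow yields the global statement; everything outside this filtration estimate is formal.
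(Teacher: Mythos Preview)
Your argument is correct and, at the infinitesimal level, is the same as the paper's: both reduce to the observation that a Hamiltonian $H$ with no constant term in $p,\bar p$ acts on $p^k s$ (equivalently on $\mf m^{\,k}$) by terms of $\mf m$-degree at least $k$. The paper simply writes this out in the local basis $B=(s,ps,\dots,p^{n-1}s)$, computing $\{H,p^ks\}=p^k\{H,s\}+kp^{k-1}(\partial H)s$ and noting that each summand lies in $\Span(p^ks,\dots,p^{n-1}s)$ because $H$ has no constant term; your Poisson-bracket estimate $\{\mf m^{\,a},\mf m^{\,b}\}\subseteq\mf m^{\,a+b-1}$ together with $H\in\mf m$ is exactly this computation in invariant language.

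What you add is the identification $F_k=\mf m^{\,n-k}V'_z$, which is a genuine improvement in clarity: it makes transparent \emph{why} the flag is natural (it is the order-of-vanishing filtration along the zero-section, and higher diffeomorphisms are by definition the symplectomorphisms fixing that zero-section). The paper's proof is shorter and entirely local; yours explains the geometry behind it. Your caveat about the global argument is well placed---the symplectomorphism does not act fibrewise, so the clean ``$\phi^*\mf m=\mf m$'' statement needs the infinitesimal interpretation you give---but since you do supply the infinitesimal check and integrate, nothing is missing.
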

\begin{proof}
We show that the filtration is preserved under the infinitesimal action of higher diffeomorphisms. By integration, the result then follows.

Higher diffeomorphisms act on all objects induced from higher complex structures, in particular on $V$ and the filtration.
Preserving the filtration is a local property, so we can work in a basis of the form $B=(s,ps,...,p^{n-1}s)$. Further take a Hamiltonian $H\in \mathcal{C}_{pf}(T^{*\C}\S)$ generating a higher diffeomorphism, in particular vanishing on the zero-section $\S\subset T^*\S$. A basis element $p^ks$ changes by $$\{H, p^ks\}=p^k\{H,s\}+kp^{k-1}(\del H)s.$$ Both terms lie in $F_{n-k}$ since $H$, considered as polynomial in $p$, has no constant term. So the action of $\Ham_0(T^*S)$ on $V$ is lower triangular. Hence, the flag structure is preserved. 
\end{proof}

\begin{Remark}
The integration of the infinitesimal higher diffeomorphism action has been done in \cite{Nolte}, using jet bundles. In particular, the filtration on $V$ corresponds to the filtration by degree in the bundle of jets, which is clearly preserved under higher diffeomorphisms.
\end{Remark}


\subsection{Conjugated higher complex structures}\label{dualcomplexstructure}

There is a natural notion of conjugated space to $\cotang$ using the natural complex conjugation on the complexified cotangent bundle $T^{*\C}S$. To an ideal $I\in \Hilb^n_{red}(T^{*\C}S)$, we associate the ideal $\bar{I}$ and then consider its $\Ham_0(T^*S)$-equivalence class.

In coordinates, we start from $$I=\langle p^n-t_2p^{n-2}-...-t_n, -\bar{p}+\mu_1+\mu_2p+...+\mu_np^{n-1} \rangle.$$
To get the conjugated structure, we have to express $\bar{I}$ in the same form as $I$, i.e. as 
\begin{align*}
\bar{I} &=\langle \bar{p}^n-\bar{t}_2\bar{p}^{n-2}-...-\bar{t}_n, -p+\bar{\mu}_1+\bar{\mu}_2\bar{p}+...+\bar{\mu}_n\bar{p}^{n-1} \rangle \\
&= \langle p^n-{}_2tp^{n-2}-...-{}_nt, -\bar{p}+{}_1\mu+{}_2\mu p+...+{}_n\mu p^{n-1} \rangle.
\end{align*}
where $({}_kt, {}_k\mu)$ are the parameters of the conjugate to $\cotang$.
It is possible to explicitly express the conjugated coordinates $({}_kt, {}_k\mu)_{2\leq k\leq n}$ in terms of $(t_k, \mu_k)_{2\leq k\leq n}$.
For example one finds ${}_2\mu=\frac{1}{\bar{\mu}_2}$ and ${}_nt = \bar{\mu}_2^n\bar{t}_n$.

\section{$L$-parabolic connections and reduction}\label{parabolicreduction}

In this section, we describe a special class of connections, called $L$-parabolic. They appear in a vector bundle $V$ with a given line subbundle $L$. The space of $L$-parabolic connections can be obtained by a Hamiltonian reduction. A generic $L$-parabolic connection has a canonical standard form, called $L$-parabolic gauge. We give local coordinates of $L$-parabolic connections and study the gauge transformation induced by changing $L$. 

The idea of the parabolic gauge comes from work of Bilal--Fock--Kogan \cite{BFK} in the work on $W$-algebras in conformal field theories.
Our treatment of the parabolic reduction follows different notation. The main new ingredient is the use of a semiclassical limit, which gives a link to the theory of higher complex structures, see Section \ref{parabolicwithlambda}.

\subsection{Atiyah-Bott reduction}

Before going to the $L$-parabolic reduction, we recall the classical reduction of connections by gauge transforms, developed by Atiyah and Bott in their famous paper \cite{AtBott}.

Let $S$ be a surface and $G$ be a semisimple Lie group with Lie algebra $\g$. Let $E$ be a principal $G$-bundle over $S$. Denote by $\mathcal{A}$ the space of all $\g$-connections on $E$. It is an affine space modeled over the vector space of $\g$-valued 1-forms $\Omega^1(S, \g)$. Further, denote by $\mathcal{G}$ the space of all gauge transforms, i.e. bundle automorphisms. We can identify the gauge group with $G$-valued functions: $\mathcal{G}=\Omega^0(S,G)$.

On the space of all connections $\mathcal{A}$, there is a natural symplectic structure given by $$\bm\hat{\omega} = \int_{S} \tr \;\delta A \wedge \delta A,$$ where tr
denotes the Killing form on $\g$ (the trace for matrix Lie algebras).
Since $\mathcal{A}$ is an affine space, its tangent space at every point is canonically isomorphic to $\Omega^1(S, \g)$. So given $d_A \in \mathcal{A}$ and $A_1, A_2 \in T_{d_A}\mathcal{A} \cong \Omega^1(S, \g)$, we have $\bm\hat{\omega}_{d_A}(A_1, A_2) = \int_{S} \tr \; A_1\wedge A_2$. Note that $\bm\hat{\omega}$ is constant (independent of $d_A$) so $d\bm\hat{\omega} = 0$. Further, the 2-form $\bm\hat{\omega}$ is clearly antisymmetric and non-degenerate (since the Killing form is). Remark finally that this construction only works on a surface since we integrate a 2-form.

We can now state the famous theorem of Atiyah--Bott (see end of Chapter 9 in \cite{AtBott} for the unitary case, see Section 1.8 in Goldman's paper \cite{Goldman.2} for the general case):
\begin{thm}[Atiyah--Bott \cite{AtBott}]\label{Thm:AB}
The action of the gauge group on the space of connections is Hamiltonian and the moment map is given by the curvature. Thus, the Hamiltonian reduction $\mathcal{A}//\mathcal{G}$ is the moduli space of flat connections.
\end{thm}

Note that the moment map $m:\mathcal{A}\to\mathrm{Lie}(\mathcal{G})^*=\Omega^0(S,\g)^*$ can be identified with the curvature $F(A)\in\Omega^2(S,\g)$ using the non-degenerate pairing  
$\langle \alpha,\beta\rangle = \int_{S} \tr \alpha \beta$, for $\alpha\in \Omega^2(S,\g)$ and $\beta\in\Omega^0(S,\g)$.


\subsection{$L$-parabolic reduction}\label{settingparab}
Let $S$ be a closed surface. We consider a complex vector bundle $V$ of rank $n$ over $S$ equipped with a volume form and a line subbundle $L\subset V$. After some general considerations, we shall equip $S$ with a complex structure and consider $L$ to be a holomorphic line bundle isomorphic to $K^{(n-1)/2}$ (where $K$ is the canonical bundle).
 
We want to mimic the Atiyah--Bott reduction for $G=\SL_n(\mathbb{C})$ on $V$ with the extra constraint of preserving $L$. 
As before, let $\mathcal{A}$ denote the space of all $\mathfrak{sl}_n(\C)$-connections on $V$ and $\mathcal{G}$ be the space of $\mathrm{SL}_n(\C)$-gauge transformations. Consider the subgroup $\mathcal{P}_L\subset \mathcal{G}$ consisting of gauge transformations preserving $L$. 
We want to compute and analyze the Hamiltonian reduction $\mathcal{A}//\mathcal{P}_L$. 
\begin{definition}
A representative in $\mathcal{A}$ of an element of $\mathcal{A}//\mathcal{P}_L$ is called \textbf{$L$-parabolic connection}.
\end{definition}

Since $\mathcal{P}_L\subset \mathcal{G}$, we know by the Atiyah--Bott Theorem \ref{Thm:AB} that the action of $\mathcal{P}_L$ on the space of connections $\mathcal{A}$ is Hamiltonian with moment map $m_L: d_A\mapsto i^*F(A)$ where $i: \mathcal{P}_L \hookrightarrow \mathcal{G}$ is the inclusion and $i^*: \Lie(\mathcal{G})^* \twoheadrightarrow \Lie(\mathcal{P}_L)^*$ the induced surjection on the dual Lie algebras. Therefore, $d_A\in\mathcal{A}$ is $L$-parabolic if and only if $m_L(d_A)=0$.

\begin{prop}\label{Prop:L-para-charac}
A connection $d_A\in\mathcal{A}$ is $L$-parabolic if and only if for all infinitesimal gauge transformations $M\in\mathcal{C}^\infty(S,\mathfrak{sl}(V))$ preserving $L$, we have $\int_{\S} \tr F(A)M = 0$.
\end{prop}
\begin{proof}
This follows from the definitions: $d_A$ is $L$-parabolic if and only if $m_L(d_A)=0$. This is equivalent to $\int_{\S} \tr F(A)M=0$ for all $M\in \mathrm{Lie}(\mathcal{P}_L)$, which are infinitesimal gauge transformations preserving $L$.
\end{proof}

\begin{coro}\label{prop:rk-1}
The curvature of an $L$-parabolic connection is of rank at most 1.
\end{coro}
Indeed, the curvature has to be concentrated in the trace-dual of $L$, which is of rank 1. We will see this more concretely below in Section \ref{Sec:local-para-conn}, where we study the local structure of $L$-parabolic connections.

We want to find a preferred representative of an $L$-parabolic connection $d_A$. To do so, consider a complex vector field $v\in \Gamma(T^{\C}\S)$ and put $\nabla=(d_A)_v$. For a section $s\in\Gamma(L)$, consider $B=(s,\nabla s, \nabla^2s,...,\nabla^{n-1}s)$. On the locus where $\det(B)\neq 0$, we get a trivialization of $V$. Changing $s$ to $fs$, where $f\in\mathcal{C}^{\infty}(\S,\C^*)$, the determinant changes to 
$$\det(fs,\nabla(fs),...,\nabla^{n-1}(fs))=f^{n}\det(B).$$
Hence up to multiplication by an $n$-th root of unity, there is a unique  section $s\in \Gamma(L)$ such that $B$ is a basis of unit volume (on the locus where $\det(B)\neq 0$).
We can use this basis to get a representative of $d_A$. This depends of course on the vector field $v$. A typical example is to consider a complex structure on $\S$ and to take the vector field given in local complex coordinates by $v=\tfrac{\partial}{\partial z}$.

\subsection{Local coordinates}\label{Sec:local-para-conn}

We investigate the local structure of $L$-parabolic connections. For that, fix a complex structure on $S$, consider an open chart $U\subset S$ with complex coordinates $(z,\bar{z})$. Denote by $(\partial=\tfrac{\partial}{\partial z}, \bar{\partial}=\tfrac{\partial}{\partial \bar{z}})$ the induced basis on $T^\C U$. We can choose a basis of $V$ restricted to $U$ such that the first basis vector generates $L$. Hence the subgroup $\mathcal{P}_L(U)=\mathcal{P}_L\!\mid_U$ is the space of functions from $U$ to the space of matrices with determinant 1 of the form 
\begin{equation}\label{Eq:form-P}
\begin{pmatrix}
* & * & \cdots & * \\
0 & * & \cdots & * \\
\vdots & \vdots & & \vdots \\
0 & *& \cdots & * \\
\end{pmatrix}.
\end{equation}

The map $i^*:\Lie(\mathcal{G}(U))^* \twoheadrightarrow \Lie(\mathcal{P}_L(U))^*$ is explicitly given by forgetting the last $n-1$ entries in the first row. This means that $m_L^{-1}(\{0\})$ is the space of all $d_A \in \mathcal{A}(U)$ such that the curvature $F(A)$ is of the form 
\begin{equation}\label{Eq:parab-curvature}
\begin{pmatrix}
 0& \xi_2 & \cdots & \xi_n\\
 0 & 0 & \cdots &0\\
 \vdots & \vdots &  & \vdots\\
 0& 0 &\cdots & 0 \\
\end{pmatrix},
\end{equation}
with $\xi_k\in\mathcal{C}^\infty(U,\C)$ for $k=2,...,n$. This proves in particular Corollary \ref{prop:rk-1}.

To get a parametrization of $L$-parabolic connections on $U$, consider $d_A\in \mathcal{A}_L(U)$ and decompose it as $d+A=d+A_1dz + A_2d\bar{z}$, where $A_1, A_2\in \Omega^0(U,\mathfrak{sl}_n(\C))$. Set $\nabla = (d_A)_{\partial} = \partial + A_1$ and $\bar{\nabla} = (d_A)_{\bar\partial}= \bar{\partial}+ A_2$.

From the end of the previous subsection, we know that there is a unique (up to overall multiplication by an $n$-th root of unity) section $s\in\Gamma(L\!\mid_U)$ such that $B=(s,\nabla s,...,\nabla^{n-1}s)$ is a basis of unit volume (at the points where $\det B\neq 0$). Since the first vector of the initial basis generated $L$, the gauge transformation towards the new basis $B$ is in $\mc{P}_L$. 
Note that in basis $B$, the connection matrix $A_1$, which is the matrix of the operator $\nabla$, becomes a companion matrix:
\begin{equation}\label{firstmatrix}
\begin{pmatrix} 
0&  &  & \bm\hat{t}_n\\
1 &0   &  &\vdots \\
& \ddots &\ddots &\bm\hat{t}_2 \\
 &  & 1 & 0 \\
\end{pmatrix},
\end{equation}
where $\bm\hat{t}_k$ are complex-valued functions on $U$ which will serve for the parametrization of $L$-parabolic connections.

We only have to determine under which condition $\det(B)\neq 0$ on $U$. This condition depends on the connection matrix $A_1$.

\begin{prop}\label{existence-para-gauge}
Let $d+A_1dz+A_2d\bar{z}$ be a connection on $U$. Denote by $a_{i,j}$ the entries of $A_1$. Define the column vector $a=(a_{i,1})_{i=2,...,n}$ and the differential operator $\widetilde{\nabla}=\partial+(a_{i,j})_{2\leq i,j\leq n}$. Then $\det(B)\neq 0$ on $U$ if and only if $\det(a,\widetilde{\nabla}a,...,\widetilde{\nabla}^{n-2}a)\neq 0$.
\end{prop}

\begin{proof}
Since $A_1$ is the matrix of the operator $\nabla$, the condition $\det B\neq 0$ is equivalent to finding a companion matrix \eqref{firstmatrix} in the $\mathcal{P}_L$-gauge orbit of $A_1$.
In equations, we look for $P\in\mathcal{P}_L$ such that $P^{-1}A_1P+P^{-1}\partial P=M$ is a companion matrix. This is equivalent to $\nabla P=PM.$

To set some notations, we write $P$ as:
$$P= \begin{pmatrix}
	p_{11} & p_{12} & \cdots &p_{1n} \\
	0 & &  &  \\
	\vdots & C_2& \cdots& C_n \\
	0 &  &  & 
\end{pmatrix},$$
where $C_k$ denotes the $k$\textsuperscript{th} column in the matrix without the first entry (i.e. a column vector of length $n-1$).
We can then compute $PM$:
$$PM=\begin{pmatrix}p_{12}&\cdots &p_{1n}&*\\ &&&* \\ C_2&\cdots &C_n&\vdots \\ &&&*\end{pmatrix},$$
where the stars denote some entries we will not need. This gives $n^2-n$ equations by the first $n-1$ columns.

Our strategy is the following: first we express $p_{ij}$ for $j>1$ in terms of $a_{ij}$ (the ``constants'') and $p_{11}$ (and its derivatives). Second, we get an expression of $p_{11}$ in terms of $a_{ij}$. In this second step, the condition of the proposition appears.

The matrix equation $\nabla P=PM$ gives for all $i,j$ with $j<n$:
\begin{equation}\label{auxmat}
\sum_{k=1}^n a_{ik}p_{kj}+\partial p_{ij} = p_{i,j+1}.
\end{equation} 
This equation expresses $p_{i,j+1}$ in terms of constants and $(p_{k,j})_{1\leq k\leq n}$. Applying this recursively and since $p_{1,j}=0$ for $j>1$, we get the first step.

To achieve our second goal, we prove the following:
\begin{lemma}
Denote by $P_0$ the square-submatrix of $P$ consisting in the columns $C_2, ..., C_n$. We then have $\det(P_0) = p_{11}^{n-1}\det(a,\widetilde{\nabla}a,...,\widetilde{\nabla}^{n-2}a).$
\end{lemma}
\begin{proof}
The first column of the equation $\nabla P=PM$ gives $C_2=p_{11}a$. From the $k$-th column of $\nabla P=PM$ (with $2\leq k\leq n-1$) and the definition of $\widetilde{\nabla}$, we get $$C_{k+1}=\widetilde{\nabla}(C_k)+p_{1k}\, a.$$
Using operations on columns, we can transform $C_{k+1}$ to $p_{11}\widetilde{\nabla}^{k-1}(a)$ for $1\leq k\leq n-1$. The lemma follows.
\end{proof}

Since $1=\det P =p_{11}\det P_0$, we get $$p_{11} = \det(a,\widetilde{\nabla}a,...,\widetilde{\nabla}^{n-2}a)^{-\frac{1}{n}}.$$
Therefore, if the $L$-parabolic gauge exists, then the previous determinant is non-zero.

Conversely, if this determinant is non-zero, we can compute $P$ from $A_1$, giving a transition matrix to a companion form, in which the basis is of the form $B=(s,\nabla s, ..., \nabla^{n-1}s)$. Since $B$ is a basis, we have $\mathrm{det}(B)\neq 0$.
\end{proof}

We denote by $\mathcal{A}^*_L(U)$ the connections on $U$ satisfying the condition of Proposition \ref{existence-para-gauge}. Elements in $\mathcal{A}^*_L(U)$ will be called $\mathbf{L}$\textbf{-generic}. Note that this notion depends on the complex structure on $U$, but not on the chosen coordinates. Indeed, in another complex coordinate system $(w,\bar{w})$, the term $A_1dz$ becomes $A_1\tfrac{dz}{dw}dw$, so the column vector $a$ is multiplied by the non-vanishing function $\tfrac{dz}{dw}$, so the determinant is multiplied by $\left(\tfrac{dz}{dw}\right)^{n-1}\neq 0$.

By definition, to an $L$-generic connection we can associate the basis $B=(s,\nabla s,...,\nabla^{n-1}s)$. In this basis, the connection matrix $A_1$ becomes a companion matrix \eqref{firstmatrix}. For an $L$-generic connection $d_A$, 
we call $\mathbf{L}$\textbf{-parabolic gauge} a local trivialization of the bundle such that $A_1$ is a companion matrix.

To get coordinates on $\mathcal{A}^*_L//\mathcal{P}_L(U)$, we can use the $\mathcal{P}_L$-gauge freedom to put $A_1$ into companion form \eqref{firstmatrix}. Then, we have to analyze the second connection matrix $A_2$. Since $A_2$ is the operator $\bar{\nabla}$ in the basis $B$, the first column of $A_2$ is given by $\bar{\nabla}s$. We will show that knowing $\nabla^n s$ and $\bar{\nabla} s$ in the basis $B$ gives the desired coordinates. We define

\begin{equation}\label{eqqq2}
\bar{\nabla}s = \bm\hat{\mu}_1 s + \bm\hat{\mu}_2 \nabla s + ... + \bm\hat{\mu}_n \nabla^{n-1}s,
\end{equation}
where the $\bm\hat{\mu}_k$ are complex-valued functions on $U$. Equation \eqref{firstmatrix} implies


\begin{equation}\label{eqqq1}
\nabla^n s = \bm\hat{t}_{n}s + \bm\hat{t}_{n-1} \nabla s+...+\bm\hat{t}_{2}\nabla^{n-2}s.
\end{equation}
Note that the functions $(\bm\hat{\mu}_k,\bm\hat{t}_k)_{2\leq k\leq n}$ do not depend on the choice of $B$ (which is unique up to multiplication by an $n$-th root of unity). Indeed, replacing $s$ by $\omega s$, where $\omega^n=1$, multiplies both sides of Equation \eqref{eqqq2} and \eqref{eqqq1} by $\omega$. 


\begin{prop}\label{Prop:para-L-para-conns}
The functions $(\bm\hat{\mu}_2, ..., \bm\hat{\mu}_n, \bm\hat{t}_{2}, ..., \bm\hat{t}_{n})$ parameterize $\mathcal{A}^*_L//\mathcal{P}_L(U)$, the space of $L$-generic parabolic connections on $U$.
\end{prop}

\begin{proof}
It is clear that these functions are independent. Thus, we have to show that $\bm\hat{\mu}_1$ and all the entries in $A_2$ which are not in the first column are determined from $(\bm\hat{\mu}_k,\bm\hat{t}_k)_{2\leq k\leq n}$.

Denote by $C_k$ the $k$-th column of $A_2$ ($1\leq k\leq n$). The curvature $F(A)=[\nabla,\bar\nabla]$ is constrained by Equation \eqref{Eq:parab-curvature}, which states that $[\nabla,\bar\nabla]\nabla^{k-1}s=\xi_k s$ for $1\leq k\leq n$ (where we put $\xi_1=0$). Hence for $2\leq k\leq n-1$, we get
\begin{equation}\label{Eq:xi-k}
\xi_k s = [\nabla,\bar\nabla]\nabla^{k-1}s = \nabla C_k-C_{k+1},
\end{equation}
since $A_2$ is the matrix of $\bar\nabla$ in basis $B$. We can use this equation to determine $C_{k+1}$ in terms of $C_k$ and the curvature $\xi_k$. Note that for $k\geq 2$, only the first $k-2$ entries of $C_k$ contain $\xi_\ell$'s (and their derivatives). In addition, the $m$-th entry of $C_k$ with $1\leq m\leq k-2$ contains a term $\xi_{k-m}$ and all other $\xi_\ell$ appearing there satisfy $\ell<k-m$.

The variable $\bm\hat{\mu}_1$ can be determined by the requirement that $A_2$ has to be traceless. Since there are no $\xi$-terms on the main diagonal of $A_2$, we get an equation for $\bm\hat{\mu}_1$ in terms of
$(\bm\hat{\mu}_2, ..., \bm\hat{\mu}_n, \bm\hat{t}_{2}, ..., \bm\hat{t}_{n})$ and their derivatives.

Finally, the curvature constraint gives one more equation:
\begin{equation}\label{Eq:xi-n}
\xi_n s = \nabla C_n-\bar\nabla(\nabla^n s)  = \nabla C_n-\bar\nabla(\bm\hat{t}_2\nabla^{n-2}s+...+\bm\hat{t}_n s)= \nabla C_n-\sum_{k=2}^n (\bar\partial\bm\hat{t}_k\nabla^{n-k}s+\bm\hat{t}_k C_{n+1-k}).
\end{equation}
By the analysis above, one sees that the coefficient in front of $\nabla^{n-k}s$ contains one term $\xi_k$ and all other terms are either our coordinates, or terms containing $\xi_\ell$ (or derivatives) with $\ell<k$. Hence, we can recursively express all $\xi_k$ in terms of $(\bm\hat{\mu}_2, ..., \bm\hat{\mu}_n, \bm\hat{t}_{2}, ..., \bm\hat{t}_{n})$. This in turn also determines $A_2$.
\end{proof}

\begin{example}\label{Ex:n2}
Consider the case $n=2$, i.e. we consider a parabolic $\SL(2, \C)$-connection $d_A=d+A_1dz+A_2d\bar{z}$, with $$A_1=\begin{pmatrix} 0 & \bm\hat{t}_2 \\ 1 & 0 \end{pmatrix} \;\;\text{ and }\;\; A_2=\begin{pmatrix} \bm\hat{\mu}_1 & \alpha_{1,2} \\ \bm\hat{\mu}_2 & \alpha_{2,2} \end{pmatrix}.$$

Let us compute the transformed matrix $A_2$. The second column $C_2$ is given by $\nabla C_1$, hence $\alpha_{1,2}=\partial\bm\hat{\mu}_1+\bm\hat{\mu}_2\bm\hat{t}_2$ and $\alpha_{2,2}=\partial\bm\hat{\mu}_2+\bm\hat{\mu}_1$. The trace of $A_2$ being zero gives $\bm\hat{\mu}_1=-\tfrac{1}{2}\partial\bm\hat{\mu}_2$. Thus:
$$A_2 = \left( \begin{array}{cc}
	-\frac{1}{2}\partial \bm\hat{\mu}_2 & -\frac{1}{2}\partial^2 \bm\hat{\mu}_2+\bm\hat{t}_2\bm\hat{\mu}_2 \\
	\bm\hat{\mu}_2 & \frac{1}{2}\partial \bm\hat{\mu}_2 
\end{array} \right).$$
The curvature is of the form $\left( \begin{smallmatrix} 0 & \xi_2 \\ 0 & 0 \end{smallmatrix} \right)$
where
$$\xi_2 = (\bar{\partial}-\bm\hat{\mu}_2\partial-2\partial\bm\hat{\mu}_2)\bm\hat{t}_2+\tfrac{1}{2}\partial^3\bm\hat{\mu}_2.$$
\end{example}

\begin{example}
Let us also compute the case $n=3$. The equations $C_2=\nabla C_1$ and $C_3=\nabla C_2+\xi_2 s$ determine the local form of the parabolic $\SL_3(\C)$-connection:
$$ d + \begin{pmatrix}0&0&\bm\hat{t}_3\\1&0&\bm\hat{t}_2\\0&1&0\end{pmatrix}dz
+\begin{pmatrix}\bm\hat{\mu}_1 & \partial\bm\hat{\mu}_1+\bm\hat{\mu}_3\bm\hat{t}_3 & \xi_2+\partial^2\bm\hat{\mu}_1+\partial(\bm\hat{\mu}_3\bm\hat{t}_3)+(\partial\bm\hat{\mu}_3+\bm\hat{\mu}_2)\bm\hat{t}_3\\
\bm\hat{\mu}_2 & \partial \bm\hat{\mu}_2+\bm\hat{\mu}_1+\bm\hat{\mu}_3\bm\hat{t}_2 & \partial^2\bm\hat{\mu}_2+2\partial\bm\hat{\mu}_1+\partial(\bm\hat{\mu}_3\bm\hat{t}_2)+\bm\hat{\mu}_3\bm\hat{t}_3+(\partial\bm\hat{\mu}_3+\bm\hat{\mu}_2)\bm\hat{t}_2\\
\bm\hat{\mu}_3 & \partial\bm\hat{\mu}_3+\bm\hat{\mu}_2&\partial^2\bm\hat{\mu}_3+2\partial\bm\hat{\mu}_2+\bm\hat{\mu}_1+\bm\hat{\mu}_3\bm\hat{t}_2\end{pmatrix}d\bar{z}.$$
The trace condition gives $\bm\hat{\mu}_1=-\tfrac{1}{3}(2\bm\hat{\mu}_3\bm\hat{t}_2+3\partial\bm\hat{\mu}_2+\partial^2\bm\hat{\mu}_3)$. Finally, the last equation $\xi_3s=\nabla C_3-\bar\nabla(\nabla^3s)$ allows to compute $\xi_2$ and $\xi_3$ in terms of the coordinates $(\bm\hat{\mu}_2,\bm\hat{\mu}_3,\bm\hat{t}_2,\bm\hat{t}_3)$.
\end{example}

\subsection{Global aspects}

Consider a finite open cover $\Sigma=\bigcup_{i=1}^N U_i$ of the compact Riemann surface $\Sigma$. Denote by $\pi_i:\mathcal{A}\to\mathcal{A}(U_i)$ the restriction maps. We consider
$$\mathcal{A}^*_L:= \bigcap_{i=1}^N \pi_i^{-1}(\mathcal{A}^*_L(U_i)),$$
i.e. the set of connections which are $L$-generic in all charts $U_i$. 
Elements in $\mathcal{A}^*_L$ are called $\mathbf{L}$\textbf{-generic} connections. 

The space of $L$-generic connections $\mathcal{A}^*_L$ is an open dense subset of $\mathcal{A}$ since it is a finite intersection of open dense subsets. It does not depend on the choice of open cover since $\pi_U(\mathcal{A}^*_L)\subset \mathcal{A}^*_L(U)$ for all open sets $U$ of $\Sigma$. However, it depends on the complex structure of $\Sigma$ and on the line subbundle $L$.

For an $L$-generic connection $d_A\in\mathcal{A}^*_L$, we know that on each $U_i$, there is a unique (up to $n$-th root of unity) section $s_i$ of $L\!\mid_{U_i}$ giving a unit-volume basis $B_i=(s_i,\nabla_i s_i, ..., \nabla_i^{n-1}s_i)$, where $\nabla_i=(d_A)_{\partial/\partial z_i}$ and $z_i$ is a complex coordinate on $U_i$.

Let us analyze the transition between two open charts, for instance between $U_1$ and $U_2$. For simplicity, put $z=z_1$ and $w=z_2$. Then on $U_1\cap U_2$, we have 
$$\nabla_2=(d_A)_{\partial/\partial w} = (d_A)_{\tfrac{dz}{dw}\partial/\partial z} = \tfrac{dz}{dw}\nabla_1.$$
On $U_1\cap U_2$ we can compare the two bases $B_1$ and $B_2$. The first vectors $s_1$ and $s_2$ are sections of the same line bundle, so there is a local function $f$ such that $s_2=fs_1$. Therefore
\begin{equation}\label{Eq:nature-L}
(s_2,\nabla_2 s_2,\nabla_2^2s_2,...)=(fs_1, \tfrac{dz}{dw}\nabla_1(fs_1), \tfrac{dz}{dw}\nabla_1\left(\tfrac{dz}{dw}\nabla_1(fs_1)\right),...).
\end{equation}
Hence the transition matrix between $B_1$ and $B_2$ is upper triangular with diagonal entries $\left(\tfrac{dz}{dw}\right)^kf$ for $k=0,...,n-1$. Since both $B_1$ and $B_2$ are of unit volume, the transition matrix has to be of determinant 1. Therefore we get $f=\left(\tfrac{dz}{dw}\right)^{(n-1)/2}$, which then gives
$$s_1(z)=\left(\frac{dw}{dz}\right)^{(n-1)/2}s_2(w).$$
This equation defines the transformation rule of a single section if the line bundle $L$ is the holomorphic bundle $K^{(n-1)/2}$, where $K$ denotes the canonical bundle of $\Sigma$. Note that if $n$ is even, we have to choose a spin structure on $\Sigma$, i.e. a choice of a square root of $K$.

From now on, we suppose $L=K^{(n-1)/2}$. Then, $s_1$ and $s_2$ come from the same section of $L$. The same holds for all other intersections. The only issue is that when we come back to $U_1$, there can be some monodromy which lies in the group of $n$-th roots of unity, since the section $s_1$ is unique up to multiplication by an $n$-th root of unity. To solve this problem, we consider all $n$ choices at once. Therefore, the $s_i$ define a \emph{global multi-valued section $s$ of $L=K^{(n-1)/2}$}.

\begin{Remark}
The section $s_i$ is non-vanishing on $U_i$ for all $i$, hence gluing them together gives a non-vanishing section. Since any section of $K^{(n-1)/2}$ has to vanish at some points, we see that when gluing the $s_i$ together, there is necessarily some monodromy which lies in the $n$-th roots of unity. This is why it is important to consider all $n$ sections at once.
\end{Remark}

Notice finally that the $\bm\hat{\mu}_k$ and $\bm\hat{t}_k$ are not tensors on $\S$ (their transformation rules under a coordinate change $z\mapsto w(z)$ are quite complicated). 
We will see in the following Section \ref{parabolicwithlambda} that if we introduce a parameter $\l$, we get tensors at the semiclassical limit.

\subsection{Change of line bundle}\label{symponconnections}

We describe an action on the space of connections by changing the line subbundle $L$ to another line subbundle $L'$. This action transforms $L$-parabolic connections to $L'$-parabolic connections. In particular it gives an action on the space of flat connections. 

\medskip
Consider two line subbundles $L$ and $L'$ of $V$, both holomorphic and isomorphic to $K^{(n-1)/2}$. Let $d_A\in \mathcal{A}^*_L\cap \mathcal{A}^*_{L'}$, a connection which is generic for $L$ and $L'$. Note that since $\Sigma$ is compact, Proposition \ref{existence-para-gauge} gives $\mathcal{A}^*_L\cap\mathcal{A}^*_{L'}\neq \emptyset$ for all $L'$ sufficiently close to $L$.

Locally on an open chart $U\subset \Sigma$, we have two sections $s\in\Gamma(L\!\mid_U)$ and $s'\in\Gamma(L'\!\mid_U)$ giving unit-volume bases $B=(s,\nabla s,...,\nabla^{n-1}s)$ and $B'=(s',\nabla s',...,\nabla^{n-1}s')$. Denote by $X$ the transition matrix between $B$ and $B'$. 
We use $X$ as gauge transformation on $d_A=d+A$. Thus, changing $L$ to $L'$ induces locally the action $X.A = XAX^{-1}+Xd\left(X^{-1}\right)$.

Notice that since $s$ and $s'$ are uniquely given up to multiplication of an $n$-th root of unity, so is $X$. The action on the connection is well-defined though, since $X$ and $\omega X$ with constant $\omega$ induce the same action on $A$.

In order to define a global action on $d_A$, we have to analyze the local description at the intersection of two charts $U_1$ and $U_2$. On $U_1\cap U_2$, there are four bases, which can be represented as follows (together with their transitions):

\vspace{0.2cm}
\begin{center}
\begin{tikzpicture}
\node[draw] (A) at (-4,1) {$B_1=(s_1(z),\nabla_z s_1(z),...,\nabla_z^{n-1}s_1(z))$};
\node[draw] (B) at (-4,-1) {$B_1'=(s_1'(z),\nabla_z s_1'(z),...,\nabla_z^{n-1}s_1'(z))$};
\node[draw] (C) at (4,1) {$B_2=(s_2(w),\nabla_w s_2(w),...,\nabla_w^{n-1}s_2(w))$};
\node[draw] (D) at (4,-1) {$B_2'=(s_2'(w),\nabla_w s_2'(w),...,\nabla_w^{n-1}s_2'(w))$};
\draw[->, >=latex] (A)--(B);
\draw[->, >=latex] (A)--(C);
\draw[->, >=latex] (C)--(D);
\draw[->, >=latex] (B)--(D);
\draw[above] (0,1) node {$P$};
\draw[above] (0,-1) node {$P'$};
\draw[right] (4,0) node {$X_2$};
\draw[right] (-4,0) node {$X_1$};
\end{tikzpicture}
\end{center}
\vspace{0.2cm}

\begin{prop}
With the notations from above, we have $P'=P$. 
\end{prop}

\begin{proof}
To compute $P$, we have to express the second basis $B_2$ in terms of the first $B_1$. Since $s_1$ and $s_2$ are two local expressions of a section of $L=K^{(n-1)/2}$, we know from Equation \eqref{Eq:nature-L} that $s_2(w)=fs_1(z)$ with $f(z)=\left(\tfrac{dz}{dw}\right)^{(n-1)/2}$. 
Since the coordinate change $z\mapsto w$ is holomorphic, we get $\nabla_w=\tfrac{dz}{dw}\nabla_z$. Hence $$\nabla_ws_2(w)=\tfrac{dz}{dw}\nabla_z(f(z)s_1(z))=\tfrac{dz}{dw}(f'(z)s_1(z)+f(z)\nabla_zs_1(z)).$$
A similar computation shows that the expression of $\nabla_w^{k}s_2(w)$ in $B_1$ only depends on $f$ and its derivatives.

Since $L'$ is also isomorphic to $K^{(n-1)/2}$, we get analogously $s_2'(w)=f(z)s_1'(z)$. Therefore, the transition matrix $P'$ is constructed the same way as $P$ with the same function $f$, i.e. $P'=P$.
\end{proof}

The proposition implies that $X_2=PX_1P^{-1}$, so the local gauge transformations $X_i$ glue together to form a well-defined global gauge transformation $X$. Thus, we get a global action of $X$ on $d_A$. Note that this action does not come from a group, but from the groupoid of pairs $(L,L')$ of line subbundles of $V$ isomorphic to $K^{(n-1)/2}$, where the composition is given by $(L,L')\circ (L',L'')=(L,L'')$.

\begin{prop}
If $d_A\in \mathcal{A}^*_L\cap\mathcal{A}^*_{L'}$ is $L$-parabolic, then $X.d_A$ is $L'$-parabolic.
\end{prop}
\begin{proof}
We use Proposition \ref{Prop:L-para-charac} to show that $X.d_A$ is $L'$-parabolic. Let $M'\in\mathcal{C}^\infty(S,\mathfrak{sl}(V))$ be an infinitesimal gauge transformation preserving $L'$. In particular, $M's'=\lambda s'$ with $\lambda\in\mathcal{C}^\infty(S,\C)$. Since $s'=Xs$, we get $M'Xs=\lambda Xs$. Thus, $X^{-1}M'X$ is an infinitesimal gauge transformation preserving $L$. We then conclude, using twice Proposition \ref{Prop:L-para-charac} and the assumption that $d_A$ is $L$-parabolic:
$$\int_\Sigma \tr(X.F(A) M') = \int_\Sigma \tr(XF(A)X^{-1}M')= \int_\Sigma \tr(F(A)X^{-1}M'X) =0.$$
\end{proof}

As a consequence, we see that we have a well-defined infinitesimal action\footnote{The action is by some Lie algebroid consisting in pairs $(L,\delta L)$, where $\delta L$ is an infinitesimal variation of $L$.} on the space of generic \emph{flat} connections, since they are $L$-parabolic for all line subbundles $L$. Let us describe with more detail the infinitesimal variation on the space of flat connections.

\medskip
Let $d_A$ be an $L$-generic flat connection. For all $L'$ sufficiently close to $L$, $d_A$ is also $L'$-generic, hence we have an action by gauge transformations on $d_A$. In particular, we have a well-defined infinitesimal action.

An infinitesimal change of $L$ can be described by the change $\delta s$ of the canonical section $s$, which we can express in the basis $B$:
\begin{equation}\label{Eq:change-L}
\delta s = v_1 s+v_2\nabla s +...+v_{n}\nabla^{n-1}s = \bm\hat{H}s,
\end{equation}
where $\bm\hat{H}=v_1+v_2\nabla+...+v_{n}\nabla^{n-1}$ is a differential operator of degree $n-1$. Multiplying $s$ by an $n$-th root of unity does not change the operator $\bm\hat{H}$, hence Equation \eqref{Eq:change-L} has a global meaning. It generates an infinitesimal action on the whole basis $B$, and thus gives an infinitesimal automorphism of the bundle $V$.

Let us describe how to compute the matrix $X$ describing the infinitesimal base change induced by $\delta s$. 
Write the base change as $$(s,\nabla s,...,\nabla^{n-1}s) \mapsto (s,\nabla s,...,\nabla^{n-1}s)+\varepsilon (\delta s,\nabla \delta s,...,\nabla^{n-1}\delta s).$$ So the first column of $X$ is just given by $Xs = \delta s = v_1s+v_2\nabla s+...+v_n\nabla^{n-1}s$. The second is given by $X\nabla s = \nabla \delta s = \nabla(v_1s+v_2\nabla s+...+v_n\nabla^{n-1}s)$. 
We notice that the construction of this matrix $X$ is exactly the same as for the matrix $A_2$ with the only difference that the variables in $A_2$ are called $\bm\hat{\mu}_k$ instead of $v_k$. Since both matrices are traceless, the terms $v_1$ and $\bm\hat\mu_1$ also correspond to each other. Therefore we have proven: 
\begin{prop}\label{computeX}
In a local chart, the matrix $X$ of the gauge transformation coming from an infinitesimal variation \eqref{Eq:change-L}, is given by $$X=A_2 \mid_{\bm\hat{\mu}_k\mapsto v_k}.$$
\end{prop}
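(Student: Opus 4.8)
The plan is to observe that the two matrices, although defined by a priori different procedures ($A_2$ as the connection matrix of $\bar{\nabla}$, and $X$ as an infinitesimal base change), are produced by one and the same purely algebraic rule, the only difference being the names of the coefficients.

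First I would isolate this common rule. Given any differential operator $D = c_1 + c_2\nabla + \cdots + c_n\nabla^{n-1}$ of degree at most $n-1$, define an $n\times n$ matrix $M(D)$ by declaring its $i$-th column to be the expression of $\nabla^{i-1}(Ds)$ in the basis $B=(s,\nabla s,\ldots,\nabla^{n-1}s)$. The reduction to the basis $B$ is carried out using the single relation $\nabla^n s = \bm\hat{P}(\nabla)s$ from Equation \eqref{eqqq1}: whenever a term $\nabla^{n+j}s$ occurs, it is rewritten via $\bm\hat{P}$, which is precisely what feeds the $\bm\hat{t}_k$ into the entries, while applying $\nabla$ to the scalar coefficients produces their $\partial$-derivatives through Leibniz's rule. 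By construction $M(D)$ depends on the $c_k$, their $\partial$-derivatives, and the $\bm\hat{t}_k$ only through universal polynomial formulas.

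Next I would identify each matrix as a value of $M$. For $A_2$ the first column is $\bar{\nabla}s=\bm\hat{Q}(\nabla)s$ by Equation \eqref{eqqq2}, and on $\mc{A}//\mc{P}$ the commutation $\bar{\nabla}\nabla^{i-1}s = \nabla^{i-1}\bar{\nabla}s$ holds (because the first $n-1$ columns of the curvature vanish), so the $i$-th column is indeed $\nabla^{i-1}(\bm\hat{Q}(\nabla)s)$; thus $A_2 = M(\bm\hat{Q})$. For $X$ the base change formula gives its $i$-th column \emph{directly} as $\nabla^{i-1}(\delta s)=\nabla^{i-1}(\bm\hat{H}s)$, with no curvature input needed, so $X = M(\bm\hat{H})$. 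Since $\bm\hat{H}$ and $\bm\hat{Q}$ have the same coefficients of degree $\ge 1$ after the substitution $\bm\hat{\mu}_k\mapsto v_k$ ($k=2,\ldots,n$), and $M$ is given by the same universal formulas, we obtain $X = A_2\mid_{\bm\hat{\mu}_k\mapsto v_k}$ on every entry built from $v_2,\ldots,v_n$.

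It remains to match the constant terms $v_1$ and $\bm\hat{\mu}_1$, which have been left free so far. Both are fixed by the same normalization: $A_2$ is traceless (forcing $\bm\hat{\mu}_1$ as in \ref{settingparab}), and $X$ must lie in $\mf{sl}_n$ to be an admissible infinitesimal gauge, forcing $v_1$. As $\tr M(D)$ is a fixed universal expression in $c_1,\ldots,c_n$ and the $\bm\hat{t}_k$, imposing $\tr=0$ yields the same formula for $v_1$ and for $\bm\hat{\mu}_1$ under the substitution, completing the identification. The only genuinely delicate point is the asymmetry in how the two matrices acquire the propagation form (through the curvature relation for $A_2$, but by definition for $X$); the crux of the argument is therefore to note that, once both are written column by column as $\nabla^{i-1}$ applied to their respective first columns, the ensuing computations are literally identical.
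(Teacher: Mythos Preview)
Your argument is correct and follows essentially the same approach as the paper: the paper observes that the $i$-th column of $X$ is $\nabla^{i-1}(\delta s)$ expressed in the basis $B$, notes this is literally the same recipe as for $A_2$ with $\bm\hat{\mu}_k$ replaced by $v_k$, and then matches the constant terms via tracelessness. Your packaging of this via the universal matrix $M(D)$ and your explicit remark about where the curvature relation enters are a little more detailed than the paper's treatment, but the substance is identical.
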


Let us compute the action induced by $X$ on the parameters $(\bm\hat{t}_k, \bm\hat{\mu}_k)_{2\leq k\leq n}$. Recall that the parameters $\bm\hat{t}_k$ are given by the relation $\nabla^n s = \bm\hat{P}(\nabla)s$ where $\bm\hat{P}(\nabla)=\bm\hat{t}_2\nabla^{n-2}+...+\bm\hat{t}_n$. Similarly, the parameters $\bm\hat{\mu}_k$ are given by $\bar{\nabla}s = \bm\hat{Q}(\nabla)s$ where $\bm\hat{Q}(\nabla) = \bm\hat{\mu}_1+\bm\hat{\mu}_2\nabla...+\bm\hat{\mu}_n\nabla^{n-1}$.

\begin{prop}\label{variation-infinit}
The infinitesimal action induces the variations $\delta \bm\hat{P}$ and $\delta \bm\hat{Q}$ given by
\begin{align*}
\delta \bm\hat{P} &= [\bm\hat{H}, -\nabla^n+\bm\hat{P}] \mod \bm\hat{I} \\
\delta \bm\hat{Q} &= [\bm\hat{H}, -\bar{\nabla}+\bm\hat{Q}] \mod \bm\hat{I}
\end{align*}
where $\bm\hat{I} = \langle \nabla^n-\bm\hat{P}, -\bar{\nabla}+\bm\hat{Q} \rangle$ is a left-ideal of differential operators.
\end{prop}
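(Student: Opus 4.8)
The plan is to read the coordinate operators $\bm\hat{P}$ and $\bm\hat{Q}$ as the defining relations of the cyclic left $\mathcal{D}$-module generated by $s$ (with $\mathcal{D}$ the ring of differential operators), and then differentiate those relations along the variation $s \mapsto s + \varepsilon\,\bm\hat{H}s$. Writing $D_1 = \nabla^n - \bm\hat{P}$ and $D_2 = -\bar{\nabla} + \bm\hat{Q}$, Equations \eqref{eqqq1} and \eqref{eqqq2} say exactly $D_1 s = 0$ and $D_2 s = 0$, so the left-ideal $\bm\hat{I} = \langle D_1, D_2\rangle$ annihilates $s$. Because $B = (s, \nabla s, \dots, \nabla^{n-1}s)$ is a frame of the rank-$n$ bundle, every differential operator reduces modulo $\bm\hat{I}$ to a polynomial in $\nabla$ of degree $\le n-1$ whose coefficients are determined by its action on $s$; consequently two operators agree modulo $\bm\hat{I}$ exactly when they act identically on $s$. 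This equivalence is what will upgrade the operator identities I obtain ``on $s$'' to the stated congruences mod $\bm\hat{I}$.

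First I would substitute $s_\varepsilon = (1+\varepsilon\,\bm\hat{H})s$ and demand operators $D_i^\varepsilon = D_i + \varepsilon\,\delta D_i$ of the same companion/traceless shape with $D_i^\varepsilon s_\varepsilon = 0$; since $\nabla^n$ and $\bar{\nabla}$ are held fixed, $\delta D_1 = -\delta\bm\hat{P}$ and $\delta D_2 = \delta\bm\hat{Q}$. Expanding $D_i^\varepsilon s_\varepsilon = 0$ to first order in $\varepsilon$ and dropping the zeroth-order relation $D_i s = 0$ leaves
\begin{align*}
D_1 \bm\hat{H}s - \delta\bm\hat{P}\,s &= 0, \\
D_2 \bm\hat{H}s + \delta\bm\hat{Q}\,s &= 0.
\end{align*}
The one real trick is to insert the vanishing term $\bm\hat{H}D_i s = 0$ so as to convert $D_i\bm\hat{H}s = (D_i\bm\hat{H} - \bm\hat{H}D_i)s = [D_i,\bm\hat{H}]s$. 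This immediately gives $\delta\bm\hat{P}\,s = [\nabla^n-\bm\hat{P},\bm\hat{H}]s = [\bm\hat{H}, -\nabla^n+\bm\hat{P}]s$ and $\delta\bm\hat{Q}\,s = -[\,-\bar{\nabla}+\bm\hat{Q},\bm\hat{H}]s = [\bm\hat{H}, -\bar{\nabla}+\bm\hat{Q}]s$, and the structural fact above then identifies $\delta\bm\hat{P}$ and $\delta\bm\hat{Q}$ with the normal forms of these commutators modulo $\bm\hat{I}$.

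The computation itself is a one-line commutator manipulation, so I expect the real work to be the bookkeeping around the quotient. Two points need care. First, I must check that the reductions modulo $\bm\hat{I}$ land in the correct spaces: $\delta\bm\hat{Q}$ should be a polynomial in $\nabla$ of degree $\le n-1$ (automatic), whereas $\delta\bm\hat{P}$ must have degree $\le n-2$, i.e. the $\nabla^{n-1}$-coefficient of the reduced commutator must vanish — this is precisely the statement that the tracelessness $\bm\hat{t}_1 = 0$ is preserved to first order, which I would verify directly. Second, I must justify that operators $D_i^\varepsilon$ of the prescribed shape annihilating $s_\varepsilon$ actually exist, i.e. that the companion/Griffiths-transversality form survives the variation; this follows because the frame condition on $B$ is open and $s_\varepsilon$ is a first-order perturbation of $s$. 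Granting these, the two displayed relations are exactly the asserted formulas for $\delta\bm\hat{P}$ and $\delta\bm\hat{Q}$.
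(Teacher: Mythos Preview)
Your argument is correct and is essentially the same as the paper's: both differentiate the defining relations $\nabla^n s=\bm\hat{P}s$ and $\bar{\nabla}s=\bm\hat{Q}s$ along $s\mapsto s+\varepsilon\bm\hat{H}s$, then use $(\nabla^n-\bm\hat{P})s=0$ and $(-\bar{\nabla}+\bm\hat{Q})s=0$ to rewrite the first-order terms as commutators acting on $s$. The paper's proof is a one-line sketch of exactly this computation; your version spells out the passage from ``equal on $s$'' to ``equal mod $\bm\hat{I}$'' and flags the tracelessness and transversality bookkeeping, which the paper leaves implicit.
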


\begin{proof}
The proposition follows directly from the observation that the variation $\delta \bm\hat{P}$ by definition satisfies
$$\nabla^n (s+\varepsilon \bm\hat{H}s) = (\bm\hat{P}+\varepsilon \delta \bm\hat{P})(s+\varepsilon \bm\hat{H}s).$$
Expanding gives $\nabla^n(\bm\hat{H}s) = \delta \bm\hat{P}(s)+\bm\hat{P}(\bm\hat{H}s)$ since $\nabla^n(s) = \bm\hat{P}(s)$. Computing differential operators modulo $\bm\hat{I}$ is equivalent to applying them to $s$. We then get
$$[\bm\hat{H}, -\nabla^n+\bm\hat{P}](s) = \bm\hat{H}(-\nabla^n(s)+\bm\hat{P}(s))+\nabla^n(\bm\hat{H}s)-\bm\hat{P}(\bm\hat{H}s) = \delta \bm\hat{P}(s).$$
The same applies to the variation of $\delta \bm\hat{Q}$.
\end{proof}

\begin{Remark}
Notice the similarity between the formulas of the previous proposition and Equations \eqref{idealvariation} describing the variation of a higher complex structure. As we will see below in Theorem \ref{actionsymponlambdaconn}, one could say that the variation of higher complex structures is the semiclassical limit of the variation on connections induced from changing $L$.
\end{Remark}

\begin{Remark}
Proposition \ref{computeX} implies that an infinitesimal gauge transformation can be decomposed uniquely into an infinitesimal gauge transformation preserving $L$ and a transformation coming from an infinitesimal change of $L$. Thus the moduli space of flat connections $\mathcal{A}^{\text{flat}}/\mathcal{G}$ behaves like a double quotient, where one first quotients by $\mathcal{P}_L$ and then by the groupoid action changing $L$.
\end{Remark}

\section{$L$-parabolic connections and higher complex structures}\label{parabolicwithlambda}

In this section, we analyze a special family of $L$-parabolic connections linked to higher complex structures. In the semiclassical limit, we show that from the coordinates of $L$-parabolic connections, we recover a higher complex structure with a $\mu$-holomorphic covector.


\subsection{Setting and parametrization}\label{param-lambda}

Consider a closed surface $\S$ equipped with a higher complex structure $I$. This induces a complex structure on $\S$, making it into a Riemann surface $\Sigma$ with local coordinates $(z,\bar{z})$. From Section \ref{indbundle}, we know that $I$ induces a bundle $V$ together a line subbundle $L\subset V$, which is holomorphic and isomorphic to $K^{(n-1)/2}$. We also choose a volume form on $V$. Thus we are in the setting of the previous section.

We consider a family of $L$-parabolic connections on $V$, indexed by $\l\in\C^*$, of the form
\begin{equation}\label{Eq:3-term-conn}
C(\l)=\l \Phi + d_A + \l^{-1}\Psi,
\end{equation}
where $\Phi$ is the induced field on $V$ from the higher complex structure, $d_A$ is a connection on $V$ satisfying $d_A(\Phi)=0$, and $\Psi\in\Omega^1(S,\mathfrak{sl}(V))$.  

An example of such a family is when $C(\l)$ is a family of \emph{flat} connections. Then $d_A(\Phi)=0$ is automatic (see Equation \eqref{Eq:curv-c-lambda} below), and flat connections are of course $L$-parabolic. 

The curvature of $C(\l)$ is given by
\begin{equation}\label{Eq:curv-c-lambda}
F(C(\l))=dC(\l)+[C(\l)\wedge C(\l)] = \l^2\Phi\wedge\Phi+\l d_A(\Phi)+F(A)+[\Phi\wedge\Psi]+\mathcal{O}(\l^{-1}),
\end{equation}
where $F(A)$ denotes the curvature of $d_A$. We have $\Phi\wedge\Phi=0$, since $\Phi$ comes from a higher complex structure, so it can be written locally as $\Phi=\Phi_1dz+\Phi_2d\bar{z}$ with $[\Phi_1,\Phi_2]=0$. Since we assume $d_A(\Phi)=0$, the curvature starts with a constant term:
\begin{equation}\label{Eq:curv-3-term}
F(C(\l))=F(A)+[\Phi\wedge\Psi]+\mathcal{O}(\l^{-1}).
\end{equation}

On a local chart $U\subset \S$, we define 
$$C_1(\l)=\l \Phi_1 + A_1 +\l^{-1}\Psi_1 \;\;\text{ and }\;\; C_2(\l)=\l \Phi_2 + A_2 + \l^{-1}\Psi_2,$$
as the $(1,0)$-part and $(0,1)$-part of the connection matrix of $C(\l)$. We use coordinates in the fibers of $V\!\mid_U$ such that $\Phi$ is in the standard form given in Equation \eqref{Eq:matrix-Phi}. We also define $\nabla=\partial + C_1(\l)$ and $\bar{\nabla}=\bar{\partial}+C_2(\l)$, where $(\partial=\tfrac{\partial}{\partial z},\bar\partial=\tfrac{\partial}{\partial\bar{z}})$ is a local basis of $T^\C U$.

\begin{prop}
For sufficiently large $\l$, the connection $C(\l)$ is $L$-generic.
\end{prop}
\begin{proof}
Consider an open chart $U\subset \Sigma$. We prove that $C(\l)$ is $L$-generic on $U$ using the criterion from Proposition \ref{existence-para-gauge}. Denote by $c_{ij}$ the matrix entries of $C_1(\l)$. Then we consider the column vector $a=(c_{i1})_{2\leq i\leq n}$. By the form of $\Phi_1$, we know that $c_{21}=\lambda+\mathcal{O}(1)$ and $c_{i1}=\mathcal{O}(1)$ for all $i=3,...,n$.

Define $\widetilde{\nabla}=\partial+M$ with $M=(c_{ij})_{2\leq i,j\leq n}$. Again from the explicit form of $\Phi_1$, we see that all entries of $\widetilde{\nabla}(a)$ are of order $\mathcal{O}(\l)$ apart from the second entry which is $\lambda^2+\mathcal{O}(\l)$. Similarly for $1\leq k\leq n-2$, all entries of $\widetilde{\nabla}^k(a)$ are of order $\mathcal{O}(\l^k)$, apart from the entry number $k+1$ which is $\l^{k+1}+\mathcal{O}(\l^k)$. Hence for large $\l$, $\det(a,\widetilde{\nabla}(a),...,\widetilde{\nabla}^{n-2}(a))\neq 0$, since the dominant term is $\l^{(n-1)n/2}$.
\end{proof}

The proposition implies that for large $\l$ (or treating $\l$ as formal variable), there is a preferred representative of $C(\l)$ in its $\mathcal{P}_L$-gauge orbit. This representative is locally of the form
\begin{equation}\label{paragauge}
d+\left(\begin{array}{cccc}
	 0& & & \bm\hat{t}_n(\l) \\
	1& 0& & \vdots \\
	 & \ddots &\ddots & \bm\hat{t}_2(\l) \\
	 & & 1& 0 
\end{array}\right)
dz + \left(\begin{array}{cc}
	 \bm\hat{\mu}_1(\l) &  \\
	 \bm\hat{\mu}_2(\l)&    \bm\hat{\alpha}_{ij}(\l)\\
	 \vdots &  \\
	 \bm\hat{\mu}_n(\l) & 
\end{array}\right) d\bar{z}
\end{equation}
where $\bm\hat{\alpha}_{ij}(\l)$ and $\bm\hat{\mu}_1(\l)$ are explicit functions of the other variables (since $C(\l)$ is $L$-parabolic, see Proposition \ref{Prop:para-L-para-conns}). 

Recall from the previous section that this form comes from a local unit-volume basis of the form $B=(s,\nabla s,...,\nabla^{n-1}s)$, where $s$ is a section of $L$, and depending on $\l$. The functions $(\bm\hat{\mu}_k,\bm\hat{t}_k)_{2\leq k\leq n}$, depending on $z, \bar{z}$ and on $\l$, are characterized by
\begin{equation}\label{paraboliccoord1}
\nabla^n s = \bm\hat{t}_{n}(\l)s + \bm\hat{t}_{n-1}(\l) \nabla s+...+\bm\hat{t}_{2}(\l)\nabla^{n-2}s ,
\end{equation}
\begin{equation}\label{paraboliccoord2}
\bar{\nabla}s = \bm\hat{\mu}_1(\l) s + \bm\hat{\mu}_2(\l) \nabla s + ... + \bm\hat{\mu}_n(\l) \nabla^{n-1}s,
\end{equation}
and give a local parametrization by Proposition \ref{Prop:para-L-para-conns}.


\begin{example}\label{examplen2}
We analyze the case $n=2$ on an open chart $U$. Consider a connection $C(\l)=\l\Phi+d+A+\l^{-1}\Phi^\dagger$, where we fixed a Hermitian structure on the bundle and $\Phi^\dagger$ denotes the conjugated transpose. We consider the case where $\Phi_1 = \left(\begin{smallmatrix} 0 & 0 \\ b_1 & 0\end{smallmatrix}\right)$, $A_1 = \left(\begin{smallmatrix} a_0 & a_1 \\ a_2 & -a_0\end{smallmatrix}\right)$, $\Phi_2=\mu_2\Phi_1$. Further we impose $A_2 = -A_1^{\dagger}$. So we have 
$$C_1(\l)=\begin{pmatrix} a_0 & a_1+\l^{-1}\bar{\mu}_2\bar{b}_1 \\ a_2+\l b_1 & -a_0\end{pmatrix}\; \text{,} \; C_2(\l)=\begin{pmatrix} -\bar{a}_0 & -\bar{a}_2+\l^{-1}\bar{b}_1 \\ -\bar{a}_1+\l\mu_2b_1 & \bar{a}_0\end{pmatrix}.$$
The condition $d_A(\Phi)=0$ gives $a_2=-\bar{\mu}_2\bar{a}_1$ and $(-\bar\partial+\mu_2\partial+\partial\mu_2-2(\bar{a}_0+\mu_2a_0))b_1=0$.
We look for $P=\left(\begin{smallmatrix} p_1 & p_2 \\ 0 & 1/p_1\end{smallmatrix}\right)$ such that 
$$P C_1(\l)P^{-1}+ P\partial P^{-1} = \begin{pmatrix} 0 & \bm\hat{t}_2(\l) \\ 1 & 0\end{pmatrix}.$$
Multiplying by $P$ from the right, one can solve the system. One finds $p_1=(\l b_1+a_2)^{1/2}$ and $p_2=-\frac{a_0}{p_1}+\frac{\partial p_1}{p_1^2}$. Hence 
$$\bm\hat{t}_2(\l)=\l a_1b_1+ \text{ constant term }-\l^{-1}\bar{\mu}_2^2 \bar{a}_1\bar{b}_1.$$ Transforming $C_2(\l)$ with $P$ we get 
$$\bm\hat{\mu}_2(\l) = \frac{-\bar{a}_1+\l\mu_2 b_1}{\l b_1-\bar{\mu}_2\bar{a}_1}.$$
We can study the behavior of $\bm\hat{\mu}_2(\l)$ for $\l$ near $\infty$ and 0. For $\l\rightarrow \infty$, the Taylor expansion of $\bm\hat{\mu}_2(\l)$ yields
$$\bm\hat{\mu}_2(\l)=\mu_2+(\mu_2\bar{\mu}_2-1)\sum_{k=1}^\infty \frac{\bar{\mu}_2^{k-1}\bar{a}_1^k}{b_1^k}\l^{-k}.$$
For $\l \rightarrow 0$, we get a similar expression, with leading term $1/\bar{\mu}_2={}_2\mu$, the conjugated complex structure from Section \ref{dualcomplexstructure}.
\end{example}

The example shows several phenomena which are true in general: 
\begin{prop}\label{parametrisationlambda}
The $\bm\hat{\mu}_k(\l)$ are rational functions in $\l$. The highest term in $\l$ when $\l \rightarrow \infty$ is $\l^{2-k}\mu_k$ where $\mu_k$ is the higher Beltrami differential from the higher complex structure $I$. 
The $\bm\hat{t}_k(\l)$ are also rational functions in $\l$. For $\l \rightarrow \infty$, the highest term is given by $\l^{k-1}t_k$ where 
\begin{equation}\label{Eq:t_k-form}
t_k=\tr(\Phi_1^{k-1}(\partial+A_1)).
\end{equation}
\end{prop}

We will see later that $(\mu_k, t_k)_{2\leq k\leq n}$ is a representative of a point in the cotangent bundle $\cotang$, which justifies the notation. The formula \eqref{Eq:t_k-form} for $t_k$ is the trace of the composition of $\Phi_1^{k-1}$ with the operator $\partial+A_1=(d_A)_{\partial}$. Note that this equation is gauge invariant, but depends on the local coordinate $z$.

\begin{Remark}
If we equip $V$ with a Hermitian structure $h$ and consider $C(\l)=\l\Phi+d_A+\l^{-1}\Phi^{*_h}$ with $d_A$ Hermitian, then we get the same properties for $\l\to 0$, where the conjugated higher complex structure with a cotangent vector from Section \ref{dualcomplexstructure} appears. 
\end{Remark}

To prove the proposition, we need the following lemma. Recall that $s$ depends on $\l$.
\begin{lemma}\label{Lemma:basis}
For sufficiently large $\l$, the set $(s,\Phi_1s,...,\Phi_1^{n-1}s)$ is a local basis.
\end{lemma}
\begin{proof}
We know from Equation \eqref{Eq:matrix-Phi}, describing the form of $\Phi$, that there is a section $\tilde{s}$ of $L$ such that $(\tilde{s},\Phi_1\tilde{s},...,\Phi_1^{n-1}\tilde{s})$ is a local basis. Hence for large $\l$, $\tilde{B}=(\tilde{s},\nabla\tilde{s},...,\nabla^{n-1}\tilde{s})$ is still a basis since $\nabla=\l\Phi_1+\mathcal{O}(1)$. Normalizing $s=f\tilde{s}$, where $f$ is a nowhere vanishing function, to get a unit-volume basis, changes $\det \tilde{B}$ by $f^n$, proving the lemma.
\end{proof}

\begin{proof}[Proof of Proposition \ref{parametrisationlambda}]
The whole point is to analyze equations \eqref{paraboliccoord1} and \eqref{paraboliccoord2} in detail.
Let us start with $$\bar{\nabla}s = \bm\hat{\mu}_1(\l) s + \bm\hat{\mu}_2(\l) \nabla s + ... + \bm\hat{\mu}_n(\l) \nabla^{n-1}s.$$
Since $\bar{\nabla}s=(\l\Phi_2+\bar{\partial}+A_2+\l^{-1}\Psi_2)s$, the highest $\l$-term is $\l\Phi_2s=\l\mu_2\Phi_1s+...+\l\mu_n\Phi_1^{n-1}s$. On the other side, the highest term of $\nabla^ks$ is $\l^k\Phi_1^ks$ for $0\leq k \leq n-1$. Lemma \ref{Lemma:basis} shows that $(s, \Phi_1s, ..., \Phi_1^{n-1}s)$ is a basis for large $\l$. Hence, we can compare the highest terms and deduce that for $\l\rightarrow \infty$: $$\bm\hat{\mu}_k(\l) = \l^{2-k}\mu_k+\mathcal{O}(\l^{1-k}).$$ 

We can decompose $\nabla^ks$ and $\bar{\nabla}s$ in the basis $(s,\Phi_1s,...,\Phi_1^{n-1}s)$. We then get a linear system for $(\bm\hat{\mu}_k)_{1\leq k\leq n}$ with coefficients in $\C[\l]$. Hence by Cramer's rule, the $\bm\hat{\mu}_k$ are rational functions in $\l$. 
The same argument holds for $\bm\hat{t}_k$.

The last thing is to study the asymptotic behavior of $\bm\hat{t}_k$. For that, we have to study
$$\nabla^n s = \bm\hat{t}_{n}(\l)s + \bm\hat{t}_{n-1}(\l) \nabla s+...+\bm\hat{t}_{2}(\l)\nabla^{n-2}s.$$ 
The highest term of $\nabla^ns$ is not $\l^n \Phi_1^n$ since $\Phi_1^n=0$. The next term is given by $$\l^{n-1}\sum_{\ell=0}^{n-1}\Phi_1^\ell \circ(\partial+A_1)\circ\Phi_1^{n-1-\ell}s$$ where $\circ$ denotes the composition of differential operators.
On the other side, the highest terms are given by $\bm\hat{t}_k\l^{n-k}\Phi_1^{n-k}s$. When $\l$ goes to infinity, we compare coefficients in the basis $(s, \Phi_1s, ..., \Phi_1^{n-1}s)$ as before. Denote the dual basis by $(s^*,(\Phi_1 s)^*,...,(\Phi_1^{n-1}s)^*)$. We then compute
\begin{align*}
\l^{n-k}\bm\hat{t}_k &= \l^{n-1} (\Phi_1^{n-k}s)^*\left(\sum_{\ell=0}^{n-1}\Phi_1^\ell \circ(\partial+A_1)\circ\Phi_1^{n-1-\ell}s \right) \\
&= \l^{n-1}\sum_{\ell=0}^{n-k} (\Phi_1^{n-k-\ell}s)^*\left((\partial +A_1)\circ\Phi_1^{k-1}(\Phi_1^{n-k-\ell} s)\right) \\
&= \l^{n-1}\tr((\partial+A_1)\circ\Phi_1^{k-1}).
\end{align*}
Using the cyclic property of the trace, we get the expression for $t_k$ as stated in the proposition. 
\end{proof}

At the end of Subsection \ref{settingparab} we have noticed that $\bm\hat{t}_k$ and $\bm\hat{\mu}_k$ do not transform as tensors. We now show that the highest terms, $t_k$ and $\mu_k$, are tensors. Recall that $K=T^{*(1,0)}\Sigma$ is the canonical bundle and that we use the shorthand notation $K^i=K^{\otimes i}$.
\begin{prop}\label{highesttermtensor}
For $2\leq i\leq n$, we have $t_i \in \Gamma(K^i)$ and $\mu_i \in \Gamma(K^{1-i}\otimes \bar{K})$.
\end{prop}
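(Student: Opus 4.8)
The statement splits into an essentially formal part for the $\mu_i$ and a part requiring a genuine argument for the $t_i$. I would dispose of the Beltrami differentials first. By Proposition~\ref{parametrisationlambda} the leading coefficient of $\bm\hat{\mu}_i(\l)$ as $\l\to\infty$ is the higher Beltrami differential $\mu_i$ of the $n$-complex structure $I$, whose weight $\mu_i\in\Gamma(K^{1-i}\otimes\bar K)$ was already recorded in Section~\ref{highercomplex} just after \eqref{I-expr}. If one wants an intrinsic verification, I would instead read it off the induced bundle of \ref{indbundle}: there $\Phi_1\in\Gamma(\End V\otimes K)$ is the canonical endomorphism-valued $(1,0)$-form, so $\Phi_1^{k-1}\in\Gamma(\End V\otimes K^{k-1})$, while $\Phi_2\in\Gamma(\End V\otimes\bar K)$; matching the relation $\Phi_2=\sum_k \mu_k\Phi_1^{k-1}$ on the $(k-1)$-st subdiagonal forces $\mu_k\in\Gamma(\bar K\otimes K^{1-k})$.

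For the $t_i$ the plan is to start from the formula $t_i=\tr A_1\Phi_1^{i-1}$ of Proposition~\ref{parametrisationlambda} and track its behaviour under a holomorphic coordinate change $z\mapsto w$. Here $\Phi_1^{i-1}\in\Gamma(\End V\otimes K^{i-1})$ is intrinsic, strictly lower triangular and trace-free, whereas $A_1$ is the $(1,0)$-connection form of $A$ written in the diagonal frame $B=(s,\nabla s,\dots,\nabla^{n-1}s)$ of \ref{indbundle}, and hence is \emph{not} tensorial on its own. Under $z\mapsto w$ the frame $B$ changes by a diagonal matrix $g$ (as noted in \ref{indbundle}, the basis transforms diagonally), so $A_1\mapsto gA_1g^{-1}-(\del g)g^{-1}$. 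A short computation with cyclicity of the trace gives
\[
\tr\big(gA_1g^{-1}\cdot g\Phi_1^{i-1}g^{-1}\big)=\tr\big(A_1\Phi_1^{i-1}\big),\qquad \tr\big((\del g)g^{-1}\cdot g\Phi_1^{i-1}g^{-1}\big)=\tr\big(\Phi_1^{i-1}g^{-1}\del g\big),
\]
and the second, inhomogeneous term \emph{vanishes} because $g^{-1}\del g$ is diagonal while $\Phi_1^{i-1}$ is strictly lower triangular, so their product is strictly lower triangular and trace-free. Thus $t_i$ transforms by pure conjugation; counting weights, the $i-1$ factors of $K$ carried by $\Phi_1^{i-1}$ together with the one factor of $K$ from the $(1,0)$-form $A_1\,dz$ and the weightless contraction $\tr\colon\End V\to\mathcal{O}$ yield $t_i\in\Gamma(K^i)$.

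The one real obstacle is precisely the vanishing of the inhomogeneous term $\tr(\Phi_1^{i-1}g^{-1}\del g)$: the combination $\tr A_1\Phi_1^{i-1}$ is \emph{not} invariant under arbitrary frame changes (the extra term survives for non-diagonal $g$), so the argument genuinely relies on two structural facts from \ref{indbundle} — that coordinate changes act on $V$ through \emph{diagonal} transition matrices, and that $\Phi_1$ is \emph{strictly lower triangular}. Once these are invoked the non-tensorial part of the connection drops out of the pairing and the weight bookkeeping is routine. Running the same argument at $\l\to 0$, with $\Psi$ in place of $\Phi$, gives the analogous tensorial weights for the conjugated coordinates.
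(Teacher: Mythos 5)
Your proof is correct and follows essentially the same route as the paper: the weights of the $\mu_i$ are read off from the relation $\Phi_2\,d\bar z=\sum_k\mu_k\,\Phi_1^{k-1}(dz)^{k-1}$, and those of the $t_i$ from the formula $t_i=\tr A_1\Phi_1^{i-1}$ under a holomorphic coordinate change. The paper's version of the second step simply asserts that $\Phi_1$ and $A_1$ are both $(1,0)$-forms; your explicit verification that the inhomogeneous connection term $\tr\bigl(\Phi_1^{i-1}g^{-1}\partial g\bigr)$ drops out --- because the transition matrices of the frame $B$ are diagonal while $\Phi_1^{i-1}$ is strictly lower triangular --- is a detail the paper elides, and it is a worthwhile addition.
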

\begin{proof}
Consider a holomorphic coordinate change $z\mapsto w(z)$. We compute how $\mu_i(z,\bar{z})$ and $t_i(z,\bar{z})$ change. 
For $\mu_i$, notice that $\Phi_1(w) = \Phi_1(z) \frac{dz}{dw}$, so using $$\Phi_2(z)d\bar{z} = \mu_3(z,\bar{z})\Phi_1^2(z)dz+...+\mu_n(z,\bar{z})\Phi_1^{n-1}(z)dz^{n-1},$$ we easily deduce $\mu_i(w,\bar{w})=\frac{d\bar{z}/d\bar{w}}{(dz/dw)^{i-1}}\mu_i(z,\bar{z})$ for all $i=2,...,n$.

For $t_i$, we use $t_i(z,\bar{z})=\tr(\Phi_1^{i-1}(\partial+A_1))$ from Proposition \ref{parametrisationlambda}. Since $\Phi_1$ and $\partial+A_1=(d_A)_{\partial}$ are both $(1,0)$-forms, we conclude that $t_i$ is a $(i,0)$-form, i.e. a smooth section of $K^i$.
\end{proof}
This proposition allows us to consider $(\mu_k, t_k)_{2\leq k\leq n}$ as global objects on the surface $\S$.

\subsection{$\mu$-holomorphicity from flatness}

We show that the $\mu$-holomorphicity term relating $(\mu_k, t_k)_{2\leq k\leq n}$ appears naturally in the curvature of $C(\l)$, as the leading term in $\l$. In the case of a flat family of connections, this allows us to identify $(\mu_k, t_k)_{2\leq k\leq n}$ with a representative of a point in $\cotang$ by Theorem \ref{conditionC}.

\medskip
As before, let $C(\l)$ be a family of $L$-parabolic connections of the form $C(\l)=\l\Phi+d_A+\l^{-1}\Psi$, where $\Phi$ is induced by a higher complex structure and $d_A$ is a connection satisfying $d_A(\Phi)=0$. We want to analyze the curvature of $C(\l)$.

Since the curvature is a local notion, we can work on an open chart $U\subset \Sigma$ and use the preferred representative \eqref{paragauge} in the $\mathcal{P}_L$-gauge class of $C(\l)$. Recall that in this gauge, the curvature of $C(\l)$ is described by $(\xi_k)_{2\leq k\leq n}$, concentrated in the first row, see Equation \eqref{Eq:parab-curvature}. All the $\xi_k$ now depend on $\l$. By Proposition \ref{parametrisationlambda}, we see that $\xi_k(\l)$ is a rational function in $\l$. We consider its Taylor expansion for $\l\to\infty$.

\begin{thm}\label{conditioncinconnection}
For $k\in\{2,...,n\}$, the Taylor expansion around $\l\to\infty$ of the parabolic curvature $\xi_k(\l)$ is given by $\xi_k(\l)=\l^{k-1}\zeta_k+\mathcal{O}(\l^{k-2})$, where $\zeta_k$ is the $\mu$-holomorphicity term for $t_k$ (see Equation \eqref{Eq:mu-holomorphic}). In particular, if $C(\l)$ is flat, the covector $(t_2,...,t_n)$ is $\mu$-holomorphic.
\end{thm}

The proof strategy is to observe that $\zeta_k$ does not involve quadratic (or even higher) terms in the $(\bm\hat{t}_k)_{2\leq k\leq n}$, nor terms with multiple derivatives. We then compute $\xi_k$ modulo these terms. Note that the set of these terms form a left-ideal in the space of differential operators. By abuse of language, we say that we compute $\xi_k$ modulo $\bm\hat{t}^2$ and modulo $\partial^2$ and write this as $\mod \bm\hat{t}^2,\partial^2$.

\begin{lemma}\label{curvaturemodmod}
The parabolic curvature $\xi_k(\l)$ satisfies
$$\xi_k(\l) \equiv (-\bar{\partial}\!+\!\bm\hat{\mu}_2\partial\!+\!k\partial\bm\hat{\mu}_k)\bm\hat{t}_k+\sum_{\ell=1}^{n-k}\left((\ell\!+\!k)\partial\bm\hat{\mu}_{\ell+2}+(\ell\!+\!1)\bm\hat{\mu}_{\ell+2}\del\right)\bm\hat{t}_{k+\ell} \mod \bm\hat{t}^2,\partial^2.$$
\end{lemma}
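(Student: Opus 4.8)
The plan is to compute the parabolic curvature directly and then apply the two truncations. By Proposition~\ref{thmcourbure} the coefficients $\xi_k$ are defined through $[\nabla^n,\bar{\nabla}]s=\sum_{k=2}^n\xi_k\nabla^{n-k}s$, so I would start from $[\nabla^n,\bar{\nabla}]s=\nabla^n\bar{\nabla}s-\bar{\nabla}\nabla^n s$ and substitute the two defining relations $\bar{\nabla}s=\bm\hat{Q}(\nabla)s=\sum_{l=1}^n\bm\hat{\mu}_l\nabla^{l-1}s$ and $\nabla^n s=\bm\hat{P}(\nabla)s=\sum_{j=2}^n\bm\hat{t}_j\nabla^{n-j}s$. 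In the second term I would move $\bar{\nabla}$ across powers of $\nabla$ using $\bar{\nabla}\nabla^i s=\nabla^i\bar{\nabla}s$ for $i\le n-1$, which holds precisely because the curvature is concentrated in the last column (the same fact that proves Proposition~\ref{thmcourbure}). Both terms are then expanded by the Leibniz rule $\nabla^m(f\sigma)=\sum_i\binom{m}{i}(\partial^i f)\nabla^{m-i}\sigma$, valid since $[\nabla,f]=\partial f$ for a function $f$.

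The two truncations are what make this manageable. Working modulo $\partial^2$ retains only the terms $i=0,1$ in every Leibniz expansion, so each coefficient carries at most one derivative; working modulo $\bm\hat{t}^2$ means that each time a power $\nabla^{n+m}s$ with $m\ge 0$ is reduced to the basis $(s,\nabla s,\dots,\nabla^{n-1}s)$ via $\nabla^n s=\bm\hat{P}(\nabla)s$, only a single application of $\bm\hat{P}$ survives, a second reduction introducing a second factor of $\bm\hat{t}$. Under these rules the expression collapses to a $\bar{\partial}\bm\hat{t}$-term together with terms bilinear in $\bm\hat{\mu}$ and $\bm\hat{t}$ carrying exactly one derivative, and I would read off the coefficient of $\nabla^{n-k}s$. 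The term $\bar{\partial}\bm\hat{t}_k$ comes from $\bar{\partial}$ hitting $\bm\hat{t}_j$ in $\bar{\nabla}(\bm\hat{t}_j\nabla^{n-j}s)$ at $j=k$, while the bilinear part is assembled from the $\bm\hat{t}$-reductions appearing in $\nabla^n\bar{\nabla}s$ and from $\nabla^{n-j}\bm\hat{Q}(\nabla)s$ inside $\bar{\nabla}\nabla^n s$.

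A key intermediate point I would isolate is that the derivative-free bilinear contributions—of the form $\bm\hat{\mu}_l\bm\hat{t}_{l+k-1}$ from $\nabla^n\bar{\nabla}s$ and, with the opposite sign, from $-\bar{\nabla}\nabla^n s$—cancel exactly after relabelling the summation index; this is why only one-derivative bilinears remain, matching the shape of condition $(\mathcal{C})$. Reindexing by $l$ so that $\bm\hat{\mu}_{l+2}$ multiplies $\bm\hat{t}_{k+l}$, I would verify that the binomial and power prefactors (the $n$, $n-j$, $j-1$ that appear) recombine into the $n$-independent coefficients $(l+k)$ and $(l+1)$, together with the diagonal $\bm\hat{\mu}_2$-part $(-\bm\hat{\mu}_2\partial-k\partial\bm\hat{\mu}_2)\bm\hat{t}_k$. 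As a consistency check, for $n=2$, $k=2$ the full curvature computed in~\ref{casen2} is $(\bar{\partial}-\bm\hat{\mu}_2\partial-2\partial\bm\hat{\mu}_2)\bm\hat{t}_2+\tfrac12\partial^3\bm\hat{\mu}_2$, and the leftover $\partial^3\bm\hat{\mu}_2$ is exactly what the truncation modulo $\partial^2$ discards.

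The main obstacle is this last bookkeeping step: controlling the reductions of the high powers $\nabla^{n+j-1}s$ modulo $\bm\hat{t}^2$ while tracking first derivatives, and in particular handling the trace-determined coefficient $\bm\hat{\mu}_1$. The latter is not free; its leading part is of order $\partial\bm\hat{\mu}$, so through $\bm\hat{\mu}_1\nabla^n s=\bm\hat{\mu}_1\bm\hat{P}(\nabla)s$ it does feed a one-derivative bilinear $(\partial\bm\hat{\mu})\bm\hat{t}$ that must be folded into the $\bm\hat{\mu}_2$-coefficient. The genuine content of the lemma is precisely that, after all these contributions are collected, the explicit dependence on $n$ cancels and one is left with the universal coefficients of condition $(\mathcal{C})$.
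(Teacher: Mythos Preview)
Your approach is a direct expansion-and-collection argument, and it would work, but the paper takes a different and more conceptual route. Rather than tracking Leibniz terms by hand, the paper observes that the truncation $\mod\partial^2$ is exactly the semi-classical limit: for $h$-connections one has
\[
\lim_{h\to 0}\tfrac{1}{h}[D_1(h),D_2(h)]=\tfrac{1}{h}[D_1(h),D_2(h)]\ \mathrm{mod}\ \partial^2,
\]
so the commutator $[\nabla^n-\bm\hat P,-\bar\nabla+\bm\hat Q]$ modulo $\partial^2$ becomes the Poisson bracket $\{p^n-\bm\hat t_2p^{n-2}-\cdots,-\bar p+\bm\hat\mu_1+\bm\hat\mu_2 p+\cdots\}$. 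That Poisson bracket, modulo $\bm\hat t^2$ and $I$, was already computed in \cite{FockThomas} (their Proposition~5) and shown to equal condition~$(\mathcal C)$. The remaining point is that modulo $\bm\hat t^2,\partial^2$ one may replace the entries $\bm\hat\alpha_{n,n+1-l}$ of $A_2$ by $\bm\hat\mu_l$, which is a short check.

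What each approach buys: the paper's argument is short because it offloads the combinatorics to the classical (Poisson) computation already done elsewhere, and it makes transparent \emph{why} the answer is condition~$(\mathcal C)$---the lemma is literally the statement that the quantum curvature has the classical moment map as its leading part. Your direct method is self-contained and does not rely on the cited proposition, but you have correctly identified that the bookkeeping around the high-power reductions and the constrained term $\bm\hat\mu_1$ is where the labor sits; the semi-classical viewpoint sidesteps this entirely, since in the Poisson world $\bm\hat\mu_1$ enters only through $\{p^n,\bm\hat\mu_1\}=n p^{n-1}\partial\bm\hat\mu_1$ and the ideal reduction is the commutative one.
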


Note that this formula is almost the $\mu$-holomorphicity term. The proof is a direct and technical computation which we defer to Appendix \ref{appendix:C}.

\begin{lemma}\label{curvature-mod-derivatives}
The parabolic curvature $\xi_k(\l)$ vanishes modulo $\partial$ and $\bar\partial$. In other words, each term of $\xi_k(\l)$ contains derivatives.
\end{lemma}

\begin{proof}
Ignoring all derivatives collapses the differential operators to multiplication operations in the reduced Hilbert scheme. More precisely, the operator $\nabla$ reduces to $M_p$ and $\bar\nabla$ to $M_{\bar p}$, where $M_p$ and $M_{\bar{p}}$ denote the multiplication operators in $\C[p,\bar{p}]/I$.

For $1\leq k\leq n-1$, by Equation \eqref{Eq:xi-k}, we know that the curvatures $\xi_k$ are given by computing $[\nabla,\bar\nabla]\nabla^{k-1}s$. Modulo derivatives, the commutator $[\nabla,\bar\nabla]$ reduces to the commutator $[M_p,M_{\bar p}]$ which is zero.

For $\xi_n$, Equation \eqref{Eq:xi-n} shows that we have to compute $\nabla(C_n)-\bar\nabla(\nabla^n s)$. Modulo derivatives, this reduces to $M_p(\bar{p} p^{n-1})-M_{\bar p}(p^n) = 0$.
Therefore, there is no term left when we compute modulo $\partial$ and $\bar\partial$.
\end{proof}

\begin{proof}[Proof of Theorem \ref{conditioncinconnection}]
By definition of the parabolic curvature, we have 
\begin{equation}\label{Eq:xik-2}
\xi_k(\l)s=[\nabla,\bar\nabla]\nabla^{k-1}s,
\end{equation} 
where we put $\nabla=\partial+C_1(\l)$ and $\bar\nabla=\bar\partial+C_2(\l)$. This expression shows that $\xi_k(\l)$ is a polynomial in the $\bm\hat{t}_\ell$, $\bm\hat{\mu}_\ell$ and their derivatives. Hence to compute the highest term $\zeta_k$ in the Taylor expansion for $\l\to\infty$, we can replace $\bm\hat{t}_\ell$ by $\l^{\ell-1}t_\ell$ and $\bm\hat{\mu}_\ell$ by $\l^{2-\ell}\mu_\ell$ (see Proposition \ref{parametrisationlambda}).
Further, the curvature of $C(\l)$ has been computed in Equation \eqref{Eq:curv-3-term}, with result $[\nabla,\bar\nabla]=F(A)+[\Phi\wedge\Psi]+\mathcal{O}(\l^{-1})$. 
Since the highest term in $\l$ of $\nabla^{k-1}s$ is $\l^{k-1}\Phi_1^{k-1}s$, we see that $\xi_k(\l)=\l^{k-1}\zeta_k+\mathcal{O}(\l^{k-2})$.
In addition, since $F(A)+[\Phi\wedge\Psi]$ is a 2-tensor, it has to be of type $(1,1)$. Since $\Phi_1^{k-1}$ is of type $(k-1,0)$, we get that $\zeta_k$ is a tensor of type $(k,1)$.

We now prove that $\zeta_k$ does not involve quadratic terms in the $t_\ell$, nor terms with more than one derivative. The previous lemma then concludes.

The fact that $\zeta_k$ is a tensor of type $(k,1)$ implies that in each monomial term of $\zeta_k$, there is exactly one $\mu_\ell$ (for some $\ell$) and no $\bar\partial$-derivative, or exactly one $\bar\partial$-derivative and no $\mu_\ell$. Consider such a term $T$ of $\zeta_k$ and denote by $\alpha$ the number of $\partial$-derivatives and by $t_{\beta_1},...,t_{\beta_m}$ the $t_\ell$ (or their derivatives) appearing in $T$. 

If there is a $\bar\partial$-derivative in $T$, then from the tensor type we get $\alpha+\sum_i \beta_i=k$ and from the $\l$-contributions, we get $\sum_i \beta_i-m = k-1$. Hence $\alpha+m=1$. Since we do not have a $\mu_\ell$-term, we need $m\geq 1$, which then implies $\alpha=0$ and $m=1$. 

If $T$ contains a $\mu_\ell$, from the tensor type we get $\alpha+\sum_i\beta_i=k+\ell-1$ and from the $\l$-contributions, we get $\sum_i \beta_i-m=k+\ell-3$. Hence $\alpha+m=2$. From Lemma \ref{curvature-mod-derivatives} we see that $\alpha\geq 1$. If $\alpha=2$, then $m=0$ and we are left with $2=\alpha = k+\ell-1\geq 3$ (since $k$ and $\ell$ are at least 2), a contradiction.
Hence $\alpha=1=m$. 

As a conclusion, we see that in both cases, there is exactly one derivative and one $t_\ell$-term in each monomial contribution to $\zeta_k$.
Therefore, the highest term in $\xi_k(\l)$ is the same as in $\xi_k(\l) \mod \bm\hat{t}^2 \mod \partial^2$. Finally, the statement of the previous Lemma \ref{curvaturemodmod} concludes.
\end{proof}

The previous theorem allows us to identify $(\mu_k, t_k)_{2\leq k\leq n}$, which we extracted from the flat family $C(\l)$, with a point in $\cotang$. The $\mu_k$ are the higher Beltrami differentials coming from the higher complex structure, whereas the $t_k$ describe a $\mu$-holomorphic cotangent vector to that higher complex structure. 

\subsection{Infinitesimal action and higher diffeomorphisms}\label{inf-action-2}

In Section \ref{symponconnections} we have described an action of the groupoid of pairs of line subbundles $(L,L')$ on the space of generic connections transforming $L$-parabolic connections into $L'$-parabolic connections. In particular, there is an action on the space of flat connections. 

Hence, to study this action, we consider a family of flat connections of the form $C(\l)=\l\Phi+d_A+\l^{-1}\Psi$ as in the previous section.
More precisely, we analyze the infinitesimal action on the highest terms $(\mu_k,t_k)_{2\leq k\leq n}$ and show that this action coincides with the action on higher complex structures via higher diffeomorphisms. 

\medskip
Consider an infinitesimal change of $L$ described by the change of the section $s$: 
\begin{equation}\label{Eq:action-with-param}
\delta s(\l) = \bm\hat{v}_1(\l)s+\bm\hat{v}_2(\l)\nabla s+...+\bm\hat{v}_{n}(\l)\nabla^{n-1}s=:\bm\hat{H}(\l)s.
\end{equation} 
As described in Section \ref{symponconnections}, this induces a special gauge transformation. Proposition \ref{variation-infinit} gives the change of the coordinates $\delta \bm\hat{\mu}_k$ and $\delta\bm\hat{t}_k$ under this action. 

Note that the variations $\bm\hat{v}_k$ also depend on $\l$. More precisely by Propositions \ref{computeX} and \ref{parametrisationlambda}, for $k\geq 2$ we know that $\bm\hat{v}_k(\l)$  is a rational function in $\l$ with highest term $\l^{2-k}v_k$ when $\l \rightarrow \infty$. 
The term $\bm\hat{v}_1$ is not a free parameter, but depends on the others. It assures that the trace of the infinitesimal gauge transform is zero. One can compute that $\bm\hat{v}_1(\l)$ has highest term of degree 0.

\begin{thm}\label{actionsymponlambdaconn}
Let $C(\l)$ be a family of flat connections as above. The action by the infinitesimal gauge, induced by Equation \eqref{Eq:action-with-param}, on the highest terms $(\mu_k,t_k)_{2\leq k\leq n}$ of the parameters $(\bm\hat{\mu}_k(\l),\bm\hat{t}_k(\l))_{2\leq k\leq n}$  of $C(\l)$, is the same as the infinitesimal action of a higher diffeomorphism generated by $H(p)=v_1+v_2p+...+v_np^{n-1}$ on the higher complex structure with cotangent vector described by $(\mu_k,t_k)_{2\leq k\leq n}$.
\end{thm}
The proof relies on the fact that the semi-classical limit of a commutator of differential operators is the Poisson bracket of the corresponding symbols. 

\begin{proof}
The variation of $\bm\hat{\mu}_k$ and $\bm\hat{t}_k$ are described in Proposition \ref{variation-infinit}. The variation of higher complex structures $\mu_k$ and their covectors $t_k$ are given by the classical counterpart by replacing the Lie bracket by the Poisson bracket, see Equation \eqref{idealvariation}. From these formulas, we see that they are linear in $H$, i.e. the variation of $H_1+H_2$ is the sum of the variations associated to $H_1$ and to $H_2$ separately. This allows to reduce to a monomial $\bm\hat{H}=\bm\hat{v}_k\nabla^{k-1}$.

We first compute the variation of $\bm\hat{\mu}_\ell$ under $\bm\hat{H}$. From Proposition \ref{variation-infinit}, we know that
\begin{equation}\label{Eq:proof.13}
\delta\bm\hat{\mu}_1 s+\delta\bm\hat{\mu}_2 \nabla s+...+\delta\bm\hat{\mu}_n \nabla^{n-1}s = [\bm\hat{v}_k\nabla^{k-1},-\bar\nabla+\bm\hat{\mu}_1+\bm\hat{\mu}_2\nabla+...+\bm\hat{\mu}_n\nabla^{n-1}]s.
\end{equation}
The highest term in $\lambda$ of the left hand side is 
\begin{equation}\label{Eq:aux.pif}
\lambda(\delta\mu_2 \Phi_1s+...+\delta\mu_n \Phi_1^{n-1}s),
\end{equation}
where we used $\bm\hat{\mu}_k=\l^{2-k}\mu_k+\mathcal{O}(\l^{1-k})$ for $k\geq 2$, $\bm\hat{\mu}_1=\mathcal{O}(1)$ and $\nabla^k s=\l^k\Phi_1^k s+\mathcal{O}(\l^{k-1})$. For the right hand side, note that
\begin{equation}\label{Eq:aux-higher-diff}
[\bar\nabla,\bm\hat{v}_k\nabla^{k-1}]s = \bar\partial \bm\hat{v}_k\nabla^{k-1} s = \l \bar\partial v_k \Phi_1^{k-1}s+\mathcal{O}(1),
\end{equation}
where we used $\bm\hat{v}_k=\l^{2-k}v_k+\mathcal{O}(\l^{1-k})$ and $[\bar\nabla,\nabla^{k-1}]s=0$, coming from the zero curvature condition $[\nabla,\bar\nabla]=0$. In addition we have
\begin{align}\label{Eq:aux-higher-diff-2}
[\bm\hat{v}_k\nabla^{k-1},\bm\hat{\mu}_\ell \nabla^{\ell-1}]s &= \bm\hat{v}_k\sum_{m=1}^{k-1}\binom{k-1}{m}\partial^m\bm\hat{\mu}_\ell \nabla^{k+\ell-m-2}s-\bm\hat{\mu}_\ell\sum_{m=1}^{\ell-1}\binom{\ell-1}{m}\partial^m \bm\hat{v}_k\nabla^{k+\ell-m-2}s  \nonumber \\
&= \l((k-1)v_k\partial\mu_\ell-(\ell-1)\mu_\ell\partial v_k)\Phi_1^{k+\ell-3}s+\mathcal{O}(1).
\end{align}
For $\ell=1$, this term is simply $\mathcal{O}(1)$, since $\bm\hat{\mu}_1=\mathcal{O}(1)$.
Using Equation \eqref{Eq:aux-higher-diff} and \eqref{Eq:aux-higher-diff-2}, we can compute the right hand side of \eqref{Eq:proof.13}. The result is
\begin{equation}\label{Eq:aux.pif2}
\l \bar\partial v_k\Phi_1^{k-1}s+\l\sum_{m=2}^n ((k-1)v_k\partial\mu_m-(m-1)\mu_m\partial v_k)\Phi_1^{k+m-3}s+\mathcal{O}(1).
\end{equation}
Since $(s,\Phi_1s,...,\Phi_1^{n-1}s)$ is a basis for large $\l$, we can compare coefficients in Equations \eqref{Eq:aux.pif} and \eqref{Eq:aux.pif2} to get the formula of $\delta\mu_\ell$:
$$\delta \mu_\ell = \left \{ \begin{array}{cl}
(\bar{\partial}-\mu_2\partial+(k-1)\partial\mu_2)v_k & \text{ if  } \ell=k \\
((k-1)\partial\mu_{\ell-k+2}-(\ell-k+2)\mu_{\ell-k+2}\partial)v_k &\text{ if  } \ell>k \\
0 & \text{ if  } \ell <k.
\end{array} \right.$$
This is identical with the formula for the variation of higher Beltrami differentials under $H=v_kp^{k-1}$ \cite[Proposition 3]{FockThomas}.

Second, we compute the variation of $\bm\hat{t}_\ell$ under $\bm\hat{H}$. From Proposition \ref{variation-infinit}, we know that
\begin{equation}\label{Eq:proof.14}
\delta\bm\hat{t}_2 \nabla^{n-2}s+...+\delta\bm\hat{t}_n s = [\bm\hat{v}_k\nabla^{k-1},-\nabla^n+\bm\hat{t}_2\nabla^{n-2}+...+\bm\hat{t}_n]s.
\end{equation}
The highest term in $\l$ of the left hand side is $\l^{n-1}(\delta t_2\Phi_1^{n-2}s+...+\delta t_n s)$. For the right hand side, we can use Equation \eqref{Eq:aux-higher-diff-2} to get
$$[\bm\hat{v}_k\nabla^{k-1}, \bm\hat{t}_\ell\nabla^{n-\ell}]s = \l^{n-1}((k-1)v_k\partial t_\ell-(n-\ell)t_\ell\partial v_k)\Phi_1^{n+k-\ell-2}s+\mathcal{O}(\l^{n-2}).$$
Finally, we compute the last term for $k> 2$:
\begin{align*}
    [\nabla^n,\bm\hat{v}_k\nabla^{k-1}]s &= n\partial \bm\hat{v}_k\nabla^{n+k-2}s+\sum_{\ell=2}^n \binom{n}{\ell}\partial^\ell\bm\hat{v}_k \nabla^{n+k-\ell-1}s \\
&= n\partial \bm\hat{v}_k\nabla^{k-2}(\bm\hat{t}_2\nabla^{n-2}s+...+\bm\hat{t}_n s)+\sum_{\ell=2}^n \binom{n}{\ell}\partial^\ell\bm\hat{v}_k \nabla^{n+k-\ell-1}s \\
&= \l^{n-1} n\partial v_k(t_2\Phi_1^{n+k-4}s+...+t_n\Phi_1^{k-2}s)+\mathcal{O}(\l^{n-2}).
\end{align*}
In the last line, we used that $\binom{n}{\ell}\partial^\ell\bm\hat{v}_k \nabla^{n+k-\ell-1}s=\mathcal{O}(\l^{n+1-\ell})$. For $k=2$, we get the same formula with an extra term $\binom{n}{2}\partial^2 v_2\Phi_1^{n-1}s$.
This term will not influence the variation $\delta\bm\hat{t}_\ell$ since there is no $\Phi_1^{n-1}s$ in the highest term on the left hand side of \eqref{Eq:proof.14}.
Again, since $(s,\Phi_1s,...,\Phi_1^{n-1}s)$ is a basis, comparing coefficients gives the result:
$$\delta t_\ell = (k-1)v_k\partial t_{k+\ell-2}+(k+\ell-2)t_{k+\ell-2}\partial v_k,$$
where we put $t_\ell=0$ for $\ell>n$. This is the infinitesimal variation of a cotangent vector of a higher complex structure, see \cite[Proposition 7.18]{Fock-bundles}.
\end{proof}


\section{Reality constraint and Toda systems}\label{finalstep}

In this final section, we impose a reality constraint on the family of $L$-parabolic connections studied before. We show that this forces the family to be flat. We study the flatness equations for small $n$, and find Toda integrable systems in special cases. 
Our main conjecture states that these families of flat connections are parametrized by higher complex structures and $\mu$-holomorphic cotangent vectors.

\subsection{Reality constraint and main conjecture}

Consider a higher complex structure $I$ on a closed surface $S$ inducing a complex vector bundle $V$ with a holomorphic line subbundle $L$ isomorphic to $K^{(n-1)/2}$. We equip $V$ with a volume form and a Hermitian structure $h$ giving a Lie algebra involution $\rho$ on $\mathfrak{sl}(V)$ defined by $\rho(M)=-M^{*_h}$.

\begin{definition}
    A Hermitian structure $h$ is called $\mathbf{L}$\textbf{-compatible} if $\mathfrak{p}_L^*\cap\mathfrak{sl}(V)^{-\rho}=\{0\}$, where $\mathfrak{p}_L^*$ denotes the trace dual to the parabolic subalgebra $\mathfrak{p}_L$ determined by $L$, and $\mathfrak{sl}(V)^{-\rho}$ denotes the set of Hermitian matrices, i.e. satisfying $M^{*_h}=M$.
\end{definition}

In the sequel, we consider $L$-compatible Hermitian structures. They form an open set inside all Hermitian structures.
\begin{example}
    For $\mathfrak{sl}_2$ and $\mathfrak{p}_L=\left(\begin{smallmatrix} *&*\\0&*\end{smallmatrix}\right)$, the trace dual is given by $\mathfrak{p}_L^*=\left(\begin{smallmatrix} 0&*\\0&0\end{smallmatrix}\right)$. Let us determine the Hermitian structures which are not $L$-compatible. Consider $h=\left(\begin{smallmatrix} h_1&h_2\\h_3&h_4\end{smallmatrix}\right)$ such that $\det(h)=1$ and $hh^\dagger=\mathrm{id}$. The involution is then given by $\rho(M)=hM^\dagger h^{-1}$. We look for a non-zero matrix $M\in\mathfrak{p}_L^*$ satsifying $M^{*_h}=M$. For $M=\left(\begin{smallmatrix} 0&a\\0&0\end{smallmatrix}\right)$ we get
$$M^{*_h}=\begin{pmatrix} h_1&h_2\\h_3&h_4\end{pmatrix}\begin{pmatrix} 0&0\\\bar{a}&0\end{pmatrix}\begin{pmatrix} h_4&-h_2\\-h_3&h_1\end{pmatrix}=\begin{pmatrix} h_2h_4\bar{a}&-h_2^2\bar{a}\\h_4^2\bar{a}&-h_2h_4\bar{a}\end{pmatrix}.$$
This is again in $\mathfrak{p}_L^*$ if and only if $h_4=0$. By $\det(h)=1$ and $hh^\dagger=\mathrm{id}$, this implies that $h=\left(\begin{smallmatrix} 0&e^{i\theta}\\-e^{-i\theta}&0\end{smallmatrix}\right)$ for some $\theta\in \R$. Note that this set of Hermitian structures which are not $L$-parabolic is of codimension 2 in the space of all Hermitian structures.
\end{example}

We consider a family of $L$-parabolic connections with parameter $\l\in\C^*$ of the form
\begin{equation}\label{Eq:c-lambda-3}
C(\l)=\l\Phi + d_A + \l^{-1}\Psi,
\end{equation}
where $\Phi$ is the field on $V$ induced by the higher complex structure $I$, $d_A$ is a connection and $\Psi\in\Omega^1(S,\mathfrak{sl}(V))$. Using the Hermitian structure $h$, we now impose a reality constraint:
\begin{equation}\label{Eq:reality-constraint}
-C(-1/\bar{\l})^{*_h}=C(\l).
\end{equation}
This implies that $d_A$ is $h$-unitary and $\Psi=\Phi^{*_h}$ is the Hermitian conjugate of $\Phi$. This kind of reality constraints is typical in the twistor approach of hyperkähler manifolds \cite[Section 3 (F)]{HKLR}.

The reality constraint implies that $C(\l)$ is a family of \emph{flat} connections. This holds true in a more general setting:

\begin{prop}\label{Prop:flat-para-equiv}
    Consider a family of connections $M(\l)=\sum_{i=-N}^N A_i\l^i$, where $A_0$ is a connection on $V$ and $A_i\in\Omega^1(S,\mathfrak{sl}(V))$ for all other $i$. Consider also an $L$-compatible Hermitian structure $h$ on $V$. Finally, we impose $M(\l)=-M(-1/\bar{\l})^{*_h}$. Then $M(\l)$ is flat for all $\l\in\C^*$ if and only if it is $L$-parabolic for all $\l\in\C^*$.
\end{prop}

\begin{proof}
    Since a flat connection is always $L$-parabolic, we only have to prove one direction. Consider $M(\l)$ as in the proposition and assume it to be a family of $L$-parabolic connections. We show that it is flat.
    
    The condition $M(\l)=-M(-1/\bar{\l})^{*_h}$ implies that $A_0$ is $h$-unitary and for all $k>0$, we get $A_{-k}=(-1)^{k+1}A_k^{*_h}$. The curvature of $M(\l)$ is given by
    \begin{equation}\label{Eq:curv-general}
    F(M(\l)) = dM(\l)+M(\l)\wedge M(\l) = \textstyle\sum_k \left(dA_k+\textstyle\sum_\ell [A_\ell,A_{k-\ell}]\right)\l^k.
    \end{equation}
    Put $M_k=dA_k+\sum_\ell [A_\ell,A_{k-\ell}]$. By assumption, all $M_k$ are in the trace-dual of $\mathfrak{p}_L$. Since
    $$dA_{-k}+\textstyle\sum_\ell [A_{-\ell},A_{-k+\ell}] = (-1)^{k+1}\left(dA_k+\textstyle\sum_\ell [A_\ell,A_{k-\ell}]\right)^{*_h},$$
    we see that $(M_k)^{*_h}$ is also in $\mathfrak{p}_L^*$. Hence $M_k+M_k^{*_h}\in\mathfrak{p}_L^*\cap \mathfrak{sl}(V)^{-\rho}=\{0\}$. Therefore $iM_k\in \mathfrak{p}_L^*\cap \mathfrak{sl}(V)^{-\rho}=\{0\}$, i.e. $M_k=0$. This holds for all $k$, so the connection $M(\l)$ is flat.
\end{proof}

We come back to our family of connections $C(\l)=\l\Phi + d_A + \l^{-1}\Phi^{*_h}$. By the previous section (in particular Proposition \ref{parametrisationlambda} and Theorem \ref{conditioncinconnection}), we know that $C(\l)$ contains tensorial parameters $(\mu_k,t_k)_{2\leq k\leq n}$. The $\mu_k$ are the higher Beltrami differentials. The $t_k$ are locally given by $t_k=\tr(\Phi_1^{k-1}(\partial+A_1))$ and are $\mu$-holomorphic.
Our main conjecture is that these parameters completely describe the family of flat connections $C(\l)$:

\begin{conj}\label{nahc}
For $(\mu_k,t_k)_{2\leq k\leq n}\in T^*\mathbb{M}^n(\S)$, where the $t_k$ are small and $\mu$-holomorphic, there is a unique (up to constant scale) Hermitian structure $h$ on the bundle $V$ on $\S$ and a unique (up to unitary gauge) family of flat connections $C(\l)=\l\Phi+d_A+\l^{-1}\Phi^{*_h}$ satisfying
\begin{enumerate}
\item $\Phi$ is induced by the higher complex structure $(\mu_2,...,\mu_n)$,
\item $C(\l)$ satisfies the reality condition \eqref{Eq:reality-constraint},
\item $t_k=\tr(\Phi_1^{k-1}(\partial+A_1))$ for all $k\in\{2,...,n\}$.
\end{enumerate}
\end{conj}

From Proposition \ref{Prop:para-L-para-conns}, we know that $C(\l)$ is described by $(\bm\hat{t}(\l),\bm\hat{\mu}_k(\l))_{2\leq k\leq n}$. Conjecture \ref{nahc} says that these parameters should be determined by their highest terms.

\begin{Remark}
A step towards establishing this conjecture is described in \cite{Fock-bundles}, introducing the notion of a \emph{Fock bundle}. There, the connection $d_A$ is determined from the data of $\Phi$, $h$ and the covectors $(t_2,...,t_n)$. 
\end{Remark}

In the rest of the paper, we analyze Conjecture \ref{nahc} in some special situations. Since we know that $C(\l)$ is a family of flat connections, we have to solve $F(C(\l))=0$. By Equation \eqref{Eq:curv-general}, this amounts to
\begin{equation}\label{Eq:flatness.2}
d_A(\Phi)=0\;\; \text{ and } \;\; F(A)+[\Phi\wedge\Phi^{*_h}]=0.
\end{equation}
The first equation should be thought of as an equation on the connection $d_A$ (which is a linear equation), while the second one is a non-linear PDE in the Hermitian structure $h$. Note the similarity of the second equation with the Hitchin equation in the nonabelian Hodge theory. The solution scheme in that theory is based on harmonic analysis \cite{Corlette, Simpson} and uses in a crucial way a fixed complex structure on the surface $S$ and the holomorphic structure on the bundle $V$. In our case, the bundle $V$ has no holomorphic structure, so the techniques do not apply. Only in the case of trivial higher complex structure ($\mu_k=0$ for all $k$) with trivial covector ($t_k=0$ for all $k$), we are in the setting of the non-abelian Hodge theory, where the Higgs field is the uniformizing one.




\subsection{Local standard form}\label{standard-form}

Before studying the flatness equations, we can reduce the family of connections $C(\l)=\l\Phi + d_A + \l^{-1}\Phi^{*_h}$ to a local standard form. Consider an open chart $U\subset S$ with coordinates $(z,\bar{z})$ and decompose $\Phi=\Phi_1dz+\Phi_2d\bar{z}$ and $d_A=d+A_1dz+A_2d\bar{z}$. Fix a trivialization of $V\!\mid_U$ such that the Hermitian structure is the standard one, i.e. $M^{*_h}=M^\dagger$ is the conjugate transpose.

\begin{lemma}\label{phi1lower}
There is a unitary gauge transformation such that $\Phi_1$ becomes lower triangular with entries of coordinates $(i+1,i)$ given by positive real functions of the form $e^{\varphi_i}$ for all $i=1,...,n-1$.
\end{lemma}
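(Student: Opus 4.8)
} The plan is to combine the filtration $F$ on $V$ induced by the higher complex structure (Subsection~\ref{indbundle}) with the hermitian metric $h$, and to run a Gram--Schmidt procedure adapted to $F$. Recall that $F$ is a complete flag $0=F_0\subset F_1\subset\dots\subset F_n=V$ with $\dim F_k=k$, and that in the local basis $B=(s,\Phi_1 s,\dots,\Phi_1^{n-1}s)$ one has $F_k=\Span(\Phi_1^{n-k}s,\dots,\Phi_1^{n-1}s)$. Since $\Phi_1$ is principal nilpotent, $\Phi_1(F_k)\subseteq F_{k-1}$ with $F_1=\ker\Phi_1$, and the induced maps $\Phi_1\colon F_k/F_{k-1}\to F_{k-1}/F_{k-2}$ are isomorphisms of line bundles. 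Because any two $h$-orthonormal frames differ by a unitary gauge, it suffices to exhibit an $h$-orthonormal frame in which $\Phi_1$ takes the desired shape.

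First I would build the frame. For $i=1,\dots,n$, I let $f_i$ be a unit section (with respect to $h$) of the line bundle $F_{n-i+1}\cap F_{n-i}^{\perp}$, where $\perp$ denotes the $h$-orthogonal complement. By construction $(f_1,\dots,f_n)$ is $h$-orthonormal and $\Span(f_i,\dots,f_n)=F_{n-i+1}$ for every $i$. Since $\Phi_1 f_i\in\Phi_1(F_{n-i+1})\subseteq F_{n-i}=\Span(f_{i+1},\dots,f_n)$, the matrix of $\Phi_1$ in this frame is strictly lower triangular, and its entry at position $(i+1,i)$ is $\langle\Phi_1 f_i,f_{i+1}\rangle_h$. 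This entry is nonzero: the class $[\Phi_1 f_i]$ in $F_{n-i}/F_{n-i-1}$ is a nonzero multiple of $[f_{i+1}]$ because $\Phi_1$ restricts to an isomorphism on the graded lines and $f_i$ projects to a generator of $F_{n-i+1}/F_{n-i}$; since $f_{i+1}$ spans the $h$-orthogonal complement of $F_{n-i-1}$ inside $F_{n-i}$, this forces $\langle\Phi_1 f_i,f_{i+1}\rangle_h\neq 0$.

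It then remains to fix the phases. The residual freedom preserving the triangular shape is the diagonal unitary gauge $f_i\mapsto e^{\sqrt{-1}\theta_i}f_i$, under which the $(i+1,i)$ entry is multiplied by $e^{\sqrt{-1}(\theta_i-\theta_{i+1})}$. Choosing $\theta_1,\dots,\theta_n$ recursively so that $\theta_i-\theta_{i+1}=-\arg\langle\Phi_1 f_i,f_{i+1}\rangle_h$ turns every subdiagonal entry into a positive real number, and I set $e^{\varphi_i}:=|\langle\Phi_1 f_i,f_{i+1}\rangle_h|>0$. As the flag $F$, the metric $h$ and $\Phi_1$ vary smoothly in $z$ and the subdiagonal entries never vanish, the $\varphi_i$ are smooth and real. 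The composite frame change from a fixed reference $h$-orthonormal frame to $(f_1,\dots,f_n)$ is then the required unitary gauge.

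The main point to watch is that every step stays unitary: one must use $h$-orthogonal complements throughout, so that the procedure is genuinely Gram--Schmidt on the flag rather than an arbitrary triangularization (ordinary complements would destroy orthonormality and hence the unitarity of the gauge). The other essential ingredient is the nonvanishing of the subdiagonal entries, which is precisely where principal nilpotency of $\Phi_1$ enters. I would also emphasize that the off-subdiagonal lower entries are left unconstrained by this argument; reducing $\Phi_1$ to a genuinely bidiagonal form would require additional structure beyond the flag and the metric, and is not claimed here.
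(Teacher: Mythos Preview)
Your proof is correct and follows essentially the same approach as the paper: both arguments amount to Gram--Schmidt/QR applied to a frame in which $\Phi_1$ is strictly lower triangular, followed by a diagonal unitary gauge to fix the phases of the subdiagonal entries. The only cosmetic difference is that you work directly with the canonical filtration $F$ induced by the higher complex structure, whereas the paper starts from an abstract $G\in\GL_n$ triangularizing the nilpotent $\Phi_1$ and then factors $G=TU$; these are two descriptions of the same construction.
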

\begin{proof}
It is sufficient to prove the lemma at a point $z\in S$.
Since gauge transformations act by conjugation on $\Phi_1(z)$, which is nilpotent, there is an invertible matrix $G(z)\in \GL_n(\C)$ such that $G\Phi_1G^{-1}$ is strictly lower triangular. Since $\Phi(z)$ varies smoothly with $z$, so does $G(z)$. We omit the dependence in $z$ in the rest of the proof.

We decompose $G$ as $G=TU$ where $T$ is lower triangular (not strict) and $U$ is unitary (Gram-Schmidt). Then the matrix $U\Phi_1U^{-1} = T^{-1}(G\Phi_1G^{-1})T$ is already lower triangular. So we have conjugated $\Phi_1$ to a lower triangular matrix via a unitary gauge.

Finally, we use a diagonal unitary gauge to change the arguments of the matrix elements with coordinates $(i+1,i)$ to zero. Since $\Phi_1(z)$ is principal nilpotent, all these elements are non-zero, so strictly positive real numbers which can be written as $e^{\varphi_i(z)}$ with $\varphi_i(z) \in \R$.
\end{proof}

Notice that the unitary gauge preserves the Hermitian conjugation, so the reality condition \eqref{Eq:reality-constraint} is preserved. In addition, the formula \eqref{Eq:t_k-form} for $t_k$ simplifies to $t_k=\tr(\Phi_1^{k-1}A_1)$ in this gauge.

Another simplification happens whenever the higher complex structure is trivial, i.e. when $\mu_k=0$ for all $k=2,...,n$. This is equivalent to $\Phi_2=0$. 

\begin{lemma}\label{a1upper}
For $\Phi_2=0$ (trivial higher complex structure) and $\Phi_1$ lower triangular, the flatness of $C(\l)$ implies that $A_1$ is upper triangular.
\end{lemma}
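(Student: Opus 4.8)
The plan is to substitute the reality constraint into $C(\l)$, specialize to $\Phi_2=0$, and read off the leading $\l$-coefficient of the zero-curvature equation. Writing $*$ for the hermitian conjugation $*_h$, the constraint gives $\Psi=\Phi^{*}$ and $A_2=-A_1^{*}$; when $\Phi_2=0$ we have $\Psi_2=\Phi_2^{*}=0$ and $\Psi_1=\Phi_1^{*}$, so the $(1,0)$- and $(0,1)$-parts reduce to $C_1(\l)=\l\Phi_1+A_1$ and $C_2(\l)=-A_1^{*}+\l^{-1}\Phi_1^{*}$. The curvature is $F(C(\l))=\big(\partial C_2-\bar\partial C_1+[C_1,C_2]\big)\,dz\wedge d\bar z$, and I would simply expand the vanishing of the bracketed expression in powers of $\l$, treating the algebra as routine.

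Collecting the coefficient of $\l^{1}$ (the coefficient of $\l^{-1}$ is its hermitian conjugate, hence redundant) isolates the single identity
$$[A_1^{*},\Phi_1]=\bar\partial\Phi_1.$$
This is the only analytic input; everything else is linear algebra. By Lemma \ref{phi1lower}, $\Phi_1$ is lower triangular, and being nilpotent it is in fact strictly lower triangular with non-vanishing subdiagonal entries $e^{\varphi_i}$, i.e. a regular (principal) nilpotent. The goal is then to show this identity forces the strictly-upper-triangular part of $A_1^{*}$ to vanish, so that $A_1^{*}$ is lower triangular and $A_1$ is upper triangular.

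For this I would grade $\mathfrak{gl}_n=\bigoplus_k \mathfrak{g}_k$ by diagonals, with $\mathfrak{g}_k$ the $k$-th superdiagonal, and write $A_1^{*}=\sum_k \beta_k$, $\beta_k\in\mathfrak{g}_k$. Note $\Phi_1\in\bigoplus_{j\geq 1}\mathfrak{g}_{-j}$ with non-zero $\mathfrak{g}_{-1}$-part $\nu$, and $\bar\partial\Phi_1\in\bigoplus_{j\geq 1}\mathfrak{g}_{-j}$ as well. Arguing by contradiction, suppose $m:=\max\{k\geq 1: \beta_k\neq 0\}$ exists. The $\mathfrak{g}_{m-1}$-component of $[A_1^{*},\Phi_1]$ can receive contributions only from brackets $[\beta_k,\mathfrak{g}_{-j}]$ with $k-j=m-1$, $j\geq 1$, $k\leq m$, which forces $k=m$, $j=1$; so it equals $[\beta_m,\nu]$. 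Since $m-1\geq 0$, the right-hand side $\bar\partial\Phi_1$ has no $\mathfrak{g}_{m-1}$-component, whence $[\beta_m,\nu]=0$. As $\nu$ is regular nilpotent, its centralizer is $\C[\nu]=\Span(I,\nu,\dots,\nu^{n-1})\subset\bigoplus_{k\leq 0}\mathfrak{g}_k$; a matrix $\beta_m\in\mathfrak{g}_m$ with $m\geq 1$ commuting with $\nu$ must vanish, a contradiction. Hence $\beta_k=0$ for all $k\geq 1$.

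The main obstacle is the level-by-level bookkeeping in the commutator: one must check that, on any non-negative diagonal, the only surviving term is $[\beta_m,\nu]$. The clean point that makes it work is that $\bar\partial\Phi_1$ lives entirely in strictly-negative levels, so on non-negative levels the equation collapses to a pure commutator condition, at which the regularity of $\Phi_1$ (its centralizer consisting only of polynomials in it, all lower triangular) closes the argument. A minor verification is that $\Phi_1$, being both nilpotent and lower triangular, is automatically strictly lower triangular with the stated non-zero subdiagonal, so $\nu$ is genuinely regular.
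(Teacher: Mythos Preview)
Your proof is correct and follows the same route as the paper: isolate the $\lambda^1$-coefficient of the flatness equation to obtain $\bar\partial\Phi_1=[A_1^{*},\Phi_1]$, then use that $\Phi_1$ is principal nilpotent to force the strictly upper part of $A_1^{*}$ to vanish. The paper states the second step as ``a simple computation shows that a commutator between a principal nilpotent lower triangular matrix and a non-zero strictly upper triangular matrix can never be strictly lower triangular''; your diagonal-grading argument with the centralizer $\C[\nu]$ is exactly a clean way to carry out that computation.
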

\begin{proof}
We write $A_1=A_l + A_u$ where $A_l$ and $A_u$ are respectively the strictly lower and the (not strictly) upper part of $A_1$. Thus we have $A_2 = -A_l^{\dagger}-A_u^{\dagger}$.
The flatness condition $d_A(\Phi)=0$ then gives $$0=\bar{\partial}\Phi_1 + [\Phi_1,A_u^{\dagger}]+[\Phi_1,A_l^{\dagger}].$$
Since the first two terms are lower triangular (the conjugate transpose exchanges upper and lower triangular matrices), so is the third term $[\Phi_1,A_l^{\dagger}]$.

A simple computation shows that a commutator between a principal nilpotent lower triangular matrix and a non-zero strictly upper triangular matrix can never be strictly lower triangular. Thus, $A_l=0$.
\end{proof}

\subsection{Usual complex structures}\label{Sec:n2}

Consider the smallest case where $n=2$, so the higher complex structure is a usual complex structure. Consider a local chart with coordinates $(z,\bar{z})$, induced from the complex structure. We can use the standard form for $\Phi_1$ and since the Beltrami differential vanishes in the induced complex structure, Lemma \ref{a1upper} gives that $A_1$ is upper triangular.

Therefore, we can write $\Phi_1 = \left(\begin{smallmatrix} 0 & 0 \\ e^{\varphi} & 0\end{smallmatrix}\right)$, $A_1 = \left(\begin{smallmatrix} a_0 & a_1 \\ 0 & -a_0\end{smallmatrix}\right)$ and $A_2 = -A_1^{\dagger}$. So we have 
$$C(\l)=d+\begin{pmatrix} a_0 & a_1 \\ \l e^{\varphi} & -a_0\end{pmatrix}dz+ \begin{pmatrix} -\bar{a}_0 & \l^{-1}e^{\varphi} \\ -\bar{a}_1 & \bar{a}_0\end{pmatrix}d\bar{z}.$$
Notice that this is Example \ref{examplen2} with $\mu_2=0$ and $b_1=e^{\varphi}$. The flatness condition gives 
$$ \left \{\begin{array}{cl}
\bar{\partial}\varphi &= \; -2\bar{a}_0 \\
\bar{\partial}a_1 &=\;  2\bar{a}_0a_1 \\
\partial \bar{a}_0+\bar{\partial}a_0 &= \; -a_1\bar{a}_1-e^{2\varphi}.
\end{array}\right. $$
The first equation gives $a_0=-\frac{\del \varphi}{2}$. For the second, write $a_1=t_2e^{-\varphi}$, where $t_2=\tr \Phi_1A_1$ is a quadratic differential. Then the second equation is equivalent to $t_2$ being holomorphic. Finally, the last equation gives $$\partial\bar{\partial} \varphi = e^{2\varphi} + t_2\bar{t}_2e^{-2\varphi},$$ which is the so-called $\boldsymbol{\cosh}$\textbf{-Gordon equation}. For small $t_2$, this equation is elliptic, and allows a unique global solution. Therefore, we see that $C(\l)$ is uniquely determined by a complex structure and a (small) holomorphic quadratic differential $t_2$. More details for this case can be found in \cite{Fock}, in particular a link to minimal surface sections in $\S\times \R$.

Let us put $C(\l)$ in $L$-parabolic gauge. The gauge transformation is given by $P=\left(\begin{smallmatrix} p_1& p_2 \\ 0& 1/p_1 \end{smallmatrix}\right)$ with $p_1=(\l e^{\varphi})^{1/2}$ and $p_2=\partial\varphi/p_1$. Using Example \ref{Ex:n2}, we then get
$$C(\l)\sim d+\begin{pmatrix}0&\bm\hat{t}_2(\l) \\1&0\end{pmatrix}dz+\begin{pmatrix}-\tfrac{1}{2}\partial\bm\hat{\mu}_2&-\tfrac{1}{2}\partial^2\bm\hat{\mu}_2+\bm\hat{t}_2\bm\hat{\mu}_2\\\bm\hat{\mu}_2(\l)&\tfrac{1}{2}\partial\bm\hat{\mu}_2\end{pmatrix}d\bar{z},$$
with $\bm\hat{t}_2(\l)=\l t_2+(\partial\varphi)^2-\partial^2\varphi$ and $\bm\hat{\mu}_2(\l)=-\l^{-1}\bar{t}_2e^{-2\varphi}$.

\subsection{$\mathrm{SL}_3(\C)$-connections and trivial higher complex structures}\label{n2n3}

Let us study the next smallest rank, $n=3$, with trivial $3$-complex structure on an open chart $U\subset \S$. We use the local standard form from Subsection \ref{standard-form}. This allows us to write 
$$C(\l)=\begin{pmatrix} a_0 & b_0 & c_0 \\ \l c_1 & a_1 & b_1 \\ \l b_2 & \l c_2 & a_2 \end{pmatrix}dz+ \begin{pmatrix} -\bar{a}_0 & \l^{-1}\bar{c}_1 & \l^{-1}\bar{b}_2 \\ -\bar{b}_0 & -\bar{a}_1 & \l^{-1}\bar{c}_2 \\ -\bar{c}_0 & -\bar{b}_1 & -\bar{a}_2\end{pmatrix}d\bar{z},$$
with $c_1=e^{\varphi_1}, c_2=e^{\varphi_2} \in \R_+$. Further, the expressions for the holomorphic differentials are $t_3=\tr \Phi_1^2A_1 = c_0c_1c_2$ and $t_2= \tr\Phi_1A_1 = b_0c_1+b_1c_2+b_2c_0$, hence $c_0=t_3e^{-\varphi_1-\varphi_2}$ and $b_1=-e^{\varphi_1-\varphi_2}b_0-b_2t_3e^{-2\varphi_2-\varphi_1}$.
The flatness condition and the zero trace condition then give $a_0=-\frac{2}{3}\partial\varphi_1-\frac{1}{3}\partial\varphi_2$, $a_1=\frac{1}{3}\partial\varphi_1-\frac{1}{3}\partial\varphi_2$ and $a_2=-a_0-a_1$.

\medskip
First, let us consider the case where $t_2=t_3=0$. Then $c_0=0$ and $b_1=-e^{\varphi_1-\varphi_2}b_0$. The remaining equations of the flatness are
$$ \left \{\begin{array}{cl}
\bar{\partial}b_2 &= \; b_2(\bar{\partial}\varphi_1+\bar{\partial}\varphi_2)-\bar{b}_0(e^{\varphi_2}+e^{2\varphi_1-\varphi_2}) \\
-\bar{\partial}b_0 &= \;  b_0\bar{\partial}\varphi_1+\bar{b}_2e^{\varphi_2} \\
2\partial\bar{\partial}\varphi_1 &= \;  2e^{2\varphi_1}-e^{2\varphi_2}+b_2\bar{b}_2+b_0\bar{b}_0(2-e^{2\varphi_1-2\varphi_2}) \\
2\partial\bar{\partial}\varphi_2 &= \;  2e^{2\varphi_2}-e^{2\varphi_1}+b_2\bar{b}_2+b_0\bar{b}_0(-1+2e^{2\varphi_1-2\varphi_2}). 
\end{array}\right. $$

We see that $b_0=b_2=0$ solves the two first equations.
The system then reduces to the \textbf{Toda integrable system} for $\mf{sl}_3$. 

The Toda system associated to a Cartan matrix $(C_{ij})$ of size $n\times n$, is a system of differential equations for the unknown functions $(\varphi_1,...,\varphi_n)$ on $\C$ given by $$\partial\bar\partial \varphi_i=\sum_j C_{ij}e^{\varphi_j}.$$
For the Cartan matrix of type $A_1$, we simply get the Liouville equation $\partial\bar\partial\varphi=2e^\varphi$. Toda integrable systems have been intensely studied, in particular their link to flat connections, see for example \cite{Leznov} and the references therein. Denote by $(e_\alpha,h_\alpha,f_\alpha)$ the standard Cartan--Weyl basis of the Lie algebra $\mathfrak{g}$ associated to the Cartan matrix $(C_{ij})$. Then the Toda equations are equivalent to the flatness of the connection 
\begin{equation}\label{Eq:tridiagonal}
d+\left(\textstyle\sum_\alpha (a^\alpha e_\alpha +b^\alpha h_\alpha)\right)dz+\left(\textstyle\sum_\alpha (\bar{a}^\alpha f_\alpha +\bar{b}^\alpha h_\alpha)\right)d\bar{z},
\end{equation}
with $a^\alpha \bar{a}^\alpha = e^{\varphi_\alpha}$. Note that the connection matrix is ``tridiagonal'', meaning that the only non-zero entries are on the main diagonal and just above and just below. For a global meaning of these equations on a Riemann surface, see \cite[Section 5.3]{Baraglia}.

Coming back to our family of flat connections, the solution to the Toda equations uniquely describes $C(\l)$. Thus, we get the same solution as the one obtained from the non-abelian Hodge correspondence applied to the uniformizing Higgs field \cite{AF, Baraglia}.


\medskip
Second, let us consider the case for $t_2=0$ and $t_3\neq 0$. If we impose $\varphi_1=\varphi_2=\varphi$ and $b_0=b_1=b_2=0$, we get $A_1=\left(\begin{smallmatrix}\partial\varphi &0&t_3e^{-2\varphi}\\0&0&0\\ 0&0&-\partial\varphi\end{smallmatrix}\right)$. The flatness equations then reduce to $\bar\partial t_3=0$ and the \textbf{\c{T}i\c{t}eica equation} 
\begin{equation}\label{Titeica}
2\del\delbar \varphi = e^{2\varphi}+t_3\bar{t}_3e^{-4\varphi}.
\end{equation}
From \cite{Loftin} and \cite{Labourie-cubic}, we know that \c{T}i\c{t}eica's equation is linked to affine spheres and the $\mathrm{SL}_3(\R)$-Hitchin component. 

\begin{Remark}
    We are not here in the setting of cyclic Higgs bundles as described in \cite{Baraglia}. For cyclic Higgs bundles, the connection $d_A$ is the Chern connection and is diagonal. In our case since $t_3=c_0c_1c_2\neq 0$, we see that $d_A$ is not diagonal.
\end{Remark}

Finally for general $t_2$ and $t_3$, the equation $d_A(\Phi)=0$ allows to compute the connection matrix $A_1$ as function of $b_2, \varphi_1, \varphi_2, t_2, t_3$. The off-diagonal terms of $F(A)+[\Phi\wedge\Phi^\dagger]=0$ then give a differential equation for $b_2$, and $\bar\partial t_2=\bar\partial t_3=0$. The diagonal terms give
$$ \left \{\begin{array}{cl}
2\partial\bar{\partial}\varphi_1 &= e^{2\varphi_2}-2e^{\varphi_1}-t_3\bar{t}_3e^{-2\varphi_1-2\varphi_2}+\lvert a_5\rvert^2-2\lvert a_1\rvert^2-\lvert b_2\rvert^2 \\
2\partial\bar{\partial}\varphi_2 &= e^{2\varphi_1}-2e^{\varphi_2}-t_3\bar{t}_3e^{-2\varphi_1-2\varphi_2}+\lvert a_1\rvert^2-2\lvert a_5\rvert^2-\lvert b_2\rvert^2
\end{array}\right. $$
This can be considered as a generalized Toda system, which we recover for $t_2=t_3=0$.

\subsection{Trivial higher complex structure and zero covector}\label{flatconnectionlambda}

We return to the study of the flat family $C(\l)$ for the general Lie algebra $\mathfrak{sl}_n$. For the trivial higher complex structure with zero covector, we find the following result, generalizing the observations for $n=2$ and $n=3$ from the previous subsections.

\begin{prop}\label{linktohiggs}
A solution to the $\mathfrak{sl}_n$-Toda integrable system determines a flat family $C(\l)$ satisfying Equations \eqref{Eq:c-lambda-3}, \eqref{Eq:reality-constraint}, $\Phi_2=0$ and $t_k=0$ for all $k$. This family $C(\l)$ is the same as the twistor family constructed by the non-abelian Hodge correspondence applied to the uniformizing Higgs field.
\end{prop}

\begin{Remark}
The recent progress in \cite{Fock-bundles} (in particular Section 3.2) shows the equivalence of $C(\l)$ with $\Phi_2=0$ and $t_k=0$ for all $k$, and the uniformizing Higgs bundles.
\end{Remark}

\begin{proof}
Using Lemma \ref{phi1lower} and \ref{a1upper}, we can write $C_1(\l)$ in the following form: $$C_1(\l)=a_0+a_1T+...+a_nT^n$$ where $a_i$ are diagonal matrices and $T$ is given by 
\begin{equation}\label{matrixT}
T=\begin{pmatrix} 0& 1 & & \\ &\ddots&\ddots & \\ &&0& 1 \\ \l &&&0 \end{pmatrix}.
\end{equation}
We denote by $a_{i,j}$ the $j$-th entry of the diagonal matrix $a_i$ and by $a_i'$ the shifted matrix defined by $(a')_{i,j} = a_{i,j+1}$. We write $a^{(k)}$ for the shift applied $k$ times. Notice that $aT=Ta^{(n-1)}$. We can then write 
$$C_2(\l)= a_0^*+T^{-1}a_1^*+...+T^{-n}a_n^*$$ where $a^*_{i,j}=\pm \bar{a}_{i,j}$, the sign depends on whether the coefficient comes with a $\l$ or not in $C_2(\l)$.

By the standard form (Lemma \ref{phi1lower}) we can further impose $a_{n,i}=e^{\varphi_i}$ for $i=1,...,n-1$ and $a_{n,0}=0$ since $0=t_n=\prod_i a_{n,i}$. 
One of the flatness equations gives $\delbar a_n = a_n (a_0^{(n-1)}-a_0)$. Together with the condition that the trace is 0, we can compute $a_0$. We get
\begin{equation}\label{a0i}
a_{0,i}= \sum_{k=1}^{i-1}\frac{k}{n}\del\varphi_k-\sum_{k=i}^{n-1}\frac{n-k}{n}\del\varphi_k.
\end{equation}
The other equations give a system of differential equations in $a_1, ..., a_{n-1}$ which is quadratic. It allows the solution $a_i=0$ for all $i=1,...,n-1$. 
In that case, using a diagonal gauge $\diag (1,\lambda, ..., \lambda^{n-1})$ the connection $C(\l)$ becomes independent of $\l$:
\begin{equation}\label{mu0t0}
C(\l)=d+\begin{pmatrix} a_{0,1}&&& \\ e^{\varphi_1} &a_{0,2} && \\ & \ddots &\ddots & \\ && e^{\varphi_{n-1}} & a_{0,n} \end{pmatrix}dz+\begin{pmatrix}-\bar{a}_{0,1} & e^{\varphi_1} && \\ &-\bar{a}_{0,2} & \ddots & \\ &&\ddots & e^{\varphi_{n-1}} \\ &&&-\bar{a}_{0,n} \end{pmatrix}d\bar{z},
\end{equation}
where all empty spots are zero.
This is precisely the form of the Toda system. It is known that the Hitchin equations for the uniformizing Higgs field are the Toda equations for $\mf{sl}_n$, see for example \cite[Proposition 3.1]{AF}.
\end{proof}
Notice that in particular the gauge class of the connection $C(\l)$ is independent of $\l\in \C^*$ (i.e. we have a variation of Hodge structure). 

Putting \eqref{mu0t0} in $L$-parabolic gauge, we get the following explicit formula for our parameters $\bm\hat{t}(\l)$ and $\bm\hat{\mu}(\l)$:
\begin{prop}
For $\Phi_2=0$ and $t_k=0$ for all $k=2,...,n$, we get $\bm\hat{\mu}_k(\l)=0$ and $\bm\hat{t}_k(\l)=w_k$ (independent of $\lambda$), where the $w_k$ are given by the so-called \emph{Miura transform} $\del^n-w_2\del^{n-2}-...-w_n = (\partial-2a_{0,n})(\partial-2a_{0,n-1})\cdots(\partial-2a_{0,1})$. Furthermore, $A_1$ is diagonal given by Equation \eqref{a0i} and the $L$-parabolic gauge is upper triangular.
\end{prop}

The proof is a combination of known results. The fact that $\bm\hat{\mu}_k(\l)=0$ and that $A_1$ is diagonal is Theorem 4.1 in \cite{AF}. The $L$-parabolic gauge is upper triangular follows from Proposition 4.4 in \cite{AF}. The formula for $\bm\hat{t}_k$, the Miura transform, can be found in Sections 4.1 and 4.2 of \cite{Bonora}.

For $n=2$, the Miura transform gives $\partial^2-\bm\hat{t}_2=(\partial-\partial\varphi)(\partial+\partial\varphi)=\partial^2+\partial^2\varphi-(\partial\varphi)^2$. Hence $\bm\hat{t}_2 = (\partial\varphi)^2-\partial^2\varphi$ which is in accordance with the result at the end of Section \ref{Sec:n2}.

\subsection{Generalized Toda system}

We indicate how to solve locally the flatness equations of $C(\l)$, leading to a generalized Toda system. We give a framework for this system in terms of the loop algebra $\mathcal{L}(\mathfrak{sl}_n)$. For loop algebras, we recommend \cite{Khesin-Wendt}.

On an open chart $U\subset S$, we can use Lemma \ref{phi1lower} to put $\Phi_1$ into the standard form (lower triangular with entries $(e^{\varphi_i})_{1\leq i\leq n-1}$ just below the main diagonal). From the analysis of the cases for $n=2$ and $n=3$, we expect the following:
\begin{enumerate}
    \item The equation $d_A(\Phi)=0$, together with the differentials $t_k=\tr \Phi_1^{k-1}(\partial+A_1)$ for $2\leq k\leq n$, determine the connection matrix $A_1$ as a function of the entries of $\Phi$ and the $t_k$.
    \item The off-diagonal terms of $F(A)+[\Phi\wedge\Phi^\dagger]=0$ determine all entries of $\Phi$ as functions of $(\mu_k,t_k,\varphi_{k-1})_{2\leq k\leq n}$, and show that the $t_k$ are $\mu$-holomorphic for all $k=2,...,n$.
    \item The diagonal terms of $F(A)+[\Phi\wedge\Phi^\dagger]=0$ give a system of differential equations for $(\varphi_1,...,\varphi_{n-1})$, generalizing the Toda system.
\end{enumerate}

Note that the degrees of freedom match: the matrix $\Phi_1$ being principal nilpotent implies that $d_A(\Phi)=0$ are effectively $n^2-n$ equations for the entries of $A_1$. Together with $t_k=\tr \Phi_1^{k-1}(\partial+A_1)$, we get $n^2-1$ equations for the entries of $A_1$. Since $F(A)+[\Phi\wedge\Phi^\dagger]$ is Hermitian, there are $(n-1)n/2$ degrees of freedom off the diagonal. Among them, $n-1$ give that the covector is $\mu$-holomorphic, and $(n-1)(n-2)/2$ serve to determine the entries in $\Phi_1$ which are not the $e^{\varphi_k}$. Finally, since $F(A)+[\Phi\wedge\Phi^\dagger]$ is traceless, there are $n-1$ independent terms on the diagonal, leading to the differential system for $(\varphi_1,...,\varphi_{n-1})$.



\medskip
Let us describe a framework in which the generalized Toda system takes a nice form.
For this, we consider the family of connections $C(\l)=\l\Phi+d_A+\l^{-1}\Phi^{*_h}$ as a single connection with values in $\mathcal{L}(\mathfrak{sl}_n)$, the loop algebra of $\mathfrak{sl}_n$.

The \textbf{loop algebra} of $\mathfrak{sl}_n$ is defined by $\mc{L}(\mf{sl}_n) = \mf{sl}_n \otimes \C[\lambda,\lambda^{-1}]$, the space of Laurent polynomials with matrix coefficients.
An element of $\mc{L}(\mf{sl}_n)$ can equally be thought of as an infinite periodic matrix $(M_{i,j})_{i,j \in \mathbb{Z}}$ with $M_{i,j}=M_{i+n,j+n}$ and finite width (i.e. $M_{i,j}=0$ for all $\left| i+j \right|$ big enough).
The isomorphism is given as follows: to $\sum_{i=-N}^N N_i\l^i$ we associate $M_{i,j}=(N_{k_j-k_i})_{r_i,r_j}$ where $i=k_in+r_i$ and $j=k_jn+r_j$ are the Euclidean divisions of $i$ and $j$ by $n$ (so $0\leq r_i,r_j <n$), see also Figure \ref{affine-matrix}. One checks that the map is an algebra isomorphism.

\begin{figure}[h]
\centering
\begin{tikzpicture}[scale=1]

\draw (0,0)--(3,0);
\draw (0,1)--(3,1);
\draw (0,2)--(3,2);
\draw (0,3)--(3,3);
\draw (0,0)--(0,3);
\draw (1,0)--(1,3);
\draw (2,0)--(2,3);
\draw (3,0)--(3,3);

\draw (-0.3,1.5) node {$\hdots$};
\draw (3.3,1.5) node {$\hdots$};
\draw (1.5,-0.3) node {$\vdots$};
\draw (1.5,3.3) node {$\vdots$};
\draw (-0.3,3.3) node {$\ddots$};
\draw (3.3,-0.3) node {$\ddots$};

\draw [dashed, gray] (-0.3,2.3)--(2.3,-0.3);
\draw [dashed, gray] (0.7,3.3)--(3.3,0.7);

\draw [white, fill=white] (0.5,1.5) circle (0.3);
\draw [white, fill=white] (1.5,0.5) circle (0.3);
\draw [white, fill=white] (1.5,2.5) circle (0.3);
\draw [white, fill=white] (2.5,1.5) circle (0.3);

\draw (1.5,1.5) node {$N_0$};
\draw (2.5,0.5) node {$N_0$};
\draw (0.5,2.5) node {$N_0$};
\draw (1.5,2.5) node {$N_1$};
\draw (2.5,1.5) node {$N_1$};
\draw (2.5,2.5) node {$N_2$};
\draw (0.5,1.5) node {$N_{-1}$};
\draw (1.5,0.5) node {$N_{-1}$};
\draw (0.5,0.5) node {$N_{-2}$};

\draw [domain=-20:20] plot ({0.2+3.5*cos(\x)},{1.4+5.5*sin(\x)});
\draw [domain=159:199] plot ({2.7+3.5*cos(\x)},{1.4+5.5*sin(\x)});

\end{tikzpicture}

\caption{Affine matrix as infinite periodic matrix}
\label{affine-matrix}
\end{figure}

In the second viewpoint, a connection $C(\l)=\l\Phi+A+\l^{-1}\Phi^{*_h}$ with $\Phi$ strictly lower triangular (which can be achieved locally by Lemma \ref{phi1lower}) is precisely an \emph{infinite matrix with period $n$ and width $n$} (shown in Figure \ref{affine-matrix} by dashed lines). The ``tridiagonal'' property of the usual Toda system (see Equation \eqref{Eq:tridiagonal}) is replaced by the property ``width equal to periodicity''. 

The space of all 1-forms with values in the space of infinite matrices of period and width $n$, which we call $\mathcal{E}_n$, has a symplectic structure using the Atiyah--Bott form $ \omega=\int_S\tr \delta A\wedge\delta A$. Plugging in $C(\l)=\l\Phi+d_A+\l^{-1}\Phi^{*_h}$, we get
$$\omega_\l=\l\int_S\tr\delta\Phi\wedge\delta A+\int_S\tr(\delta\Phi\wedge\delta\Phi^{*_h}+\delta A\wedge\delta A)+\l^{-1}\int_S\tr\delta\Phi^{*_h}\wedge\delta A,$$
where we used that $\tr\delta\Phi\wedge\delta\Phi=0$ since the trace of two strictly lower diagonal matrices is zero.

Note that for trivial higher complex structures, we know from Lemma \ref{a1upper} that $A_1$ is upper triangular. Then in the viewpoint of infinite periodic matrices, the $(1,0)$-part $C_1(\l)$ is upper triangular ($\Phi_1$ is lower triangular but $\l\Phi_1$ is upper triangular in the infinite matrix) and the $(0,1)$-part $C_2(\l)$ is lower triangular, making the similarity with the Toda system \eqref{Eq:tridiagonal} more striking.

In order to further investigate this generalized Toda system, one has to determine the subspace of $\mathcal{E}_n$ determined by the $C(\l)$ arising from the data of a higher complex structure with covector $(t_k,\mu_k)_{2\leq k\leq n}$. 
The study of this generalized Toda system is subject of future research.

\appendix

\section{Proof of Lemma \ref{curvaturemodmod}}\label{appendix:C}

We prove here Lemma \ref{curvaturemodmod}, which states:
\begin{fakelemma}
Consider a family of $L$-parabolic connections of the form $C(\l)=\l\Phi+d_A+\l^{-1}\Psi$, where $\Phi$ comes from a higher complex structure and $d_A(\Phi)=0$. Then, the parabolic curvature $\xi_k(\l)$ satisfies
$$\xi_k(\l) \equiv (-\bar{\partial}\!+\!\bm\hat{\mu}_2\partial\!+\!k\partial\bm\hat{\mu}_k)\bm\hat{t}_k+\sum_{\ell=1}^{n-k}\left((\ell\!+\!k)\partial\bm\hat{\mu}_{\ell+2}+(\ell\!+\!1)\bm\hat{\mu}_{\ell+2}\del\right)\bm\hat{t}_{k+\ell} \mod \bm\hat{t}^2,\partial^2.$$
\end{fakelemma}

\begin{proof}
The statement is local, so we can restrict attention to an open chart $U\subset \S$. Recall the local basis $B=(s,\nabla s,...,\nabla^{n-1}s)$ where $s$ is a section of $\Gamma(L\!\mid_U)$. The curvature of the connection $C(\l)$ is given by $F(C(\l))=[\nabla,\bar\nabla]$, where $\nabla=\partial+\l\Phi_1+A_1+\l^{-1}\Psi_1$ and $\bar\nabla=\bar\partial+\l\Phi_2+A_2+\l^{-1}\Psi_2$. Since the curvature of an $L$-parabolic connection is concentrated in the first row, we get for $k=2,...,n-1$:
\begin{equation}\label{Eq:xik-aux}
\xi_k s = [\nabla,\bar\nabla]\nabla^{k-1}s = \nabla C_{k}-C_{k+1},
\end{equation}
where we denote by $C_k$ the $k$-th column of $\l\Phi_2+A_2+\l^{-1}\Psi_2$, the matrix of $\bar\nabla$. For $k=n$, we have 
$$\xi_n s=[\nabla,\bar\nabla]\nabla^{n-1}s=\nabla C_n-\bar\nabla(\bm\hat{t}_n s+...+\bm\hat{t}_2 \nabla^{n-2}s).$$ 
Applying $\nabla^{n-k}$ to Equation \eqref{Eq:xik-aux}, we get
$$\xi_k \nabla^{n-k}s+(n-k)\partial\xi_k \nabla^{n-k-1}s \equiv \nabla^{n-k}C_k-\nabla^{n-k+1}C_{k-1} \;\mod \partial^2.$$
Therefore by summation and putting $\xi_1=0$, we get
\begin{equation}\label{Eq:xi-eq-aux}
\sum_{k=2}^{n}(\xi_{k}+(n+1-k)\partial\xi_{k-1})\nabla^{n-k} s \equiv \nabla^{n-1}C_2-\bar\nabla(\bm\hat{t}_n s+...+\bm\hat{t}_2 \nabla^{n-2}s) \mod \partial^2.
\end{equation}
We will compute the right hand side in the basis $B$ and compare coefficients. This will give the lemma. 


The right hand side of Equation \eqref{Eq:xi-eq-aux} equals $\nabla^{n-1}C_2-\sum_{k=2}^n(\bar\partial\bm\hat{t}_k\nabla^{n-k}s+\bm\hat{t}_kC_{n+1-k}).$
To evaluate $\bm\hat{t}_{k}C_{n+1-k}$ modulo $\bm\hat{t}^2$, we only have to compute $C_{k}$ modulo $\bm\hat{t}$. By definition, we have $C_1=\bm\hat{\mu}_1s+\bm\hat{\mu}_2\nabla s+...+\bm\hat{\mu}_{n}\nabla^{n-1}s$. A direct computation, using Equation \eqref{Eq:xik-aux} and the convention $\xi_1=0=\bm\hat{\mu}_0$, gives for $k=1,...,n-1$:
\begin{equation}\label{Eq:aux.3}
C_k\equiv \sum_{\ell=0}^{k-3} (\xi_{k-1-\ell}+\partial\xi_{k-2-\ell})\nabla^{\ell}s+\sum_{\ell=k-2}^{n-1}(\bm\hat{\mu}_{\ell-k+2}+(k-1)\partial\bm\hat{\mu}_{\ell-k+3})\nabla^{\ell}s \;\mod \bm\hat{t},\partial^2.
\end{equation}

To compute $\nabla^{n-1}C_2$, decompose $\nabla = \partial+A_1 = \partial+J+T$, where $J=\sum_{i=1}^{n-1}E_{i+1,i}$ and $T=\sum_{i=1}^{n-1}\bm\hat{t}_{n+1-i}E_{n,i}$, where $E_{i,j}$ denotes the standard basis of the space of matrices. The action of $J$ and $T$ on the basis $B$ is simple, namely $J\nabla^ks=\nabla^{k+1}s$ and $T\nabla^ks=0$ for $0\leq k\leq n-2$.
Using $C_2=\sum_{\ell=0}^{n-1}(\bm\hat{\mu}_\ell+\bm\hat{\mu}_n\bm\hat{t}_{n-\ell})\nabla^{\ell}s$, we then compute:
\begin{align}\label{eq:aux-10}
\nabla^{n-1}C_2 &\equiv A_1^{n-1}C_2+\sum_{k=0}^{n-2}A_1^k \partial(A_1^{n-2-k}C_2) \equiv \left(J^{n-1}+\textstyle\sum_{k=0}^{n-2}J^kTJ^{n-2-k}\right)C_2+\sum_{k=0}^{n-2} A_1^k \partial(A_1^{n-2-k}C_2)  \nonumber\\
&\equiv (\bm\hat{\mu}_n\bm\hat{t}_n+\partial\bm\hat{\mu}_1)\nabla^{n-1}s+\sum_{k=0}^{n-2}\sum_{\ell=k}^{n-1}\bm\hat{t}_{n+k-\ell}(\bm\hat{\mu}_{k+1}+\partial\bm\hat{\mu}_{k+2})\nabla^\ell s \nonumber\\
&\;\;\;\; + \sum_{k=0}^{n-2}A_1^k \partial((J+T)^{n-2-k}C_2) \mod \partial^2, \bm\hat{t}^2.
\end{align}
The last term needs more manipulation:
\begin{equation}\label{Eq:a1-k}
\sum_{k=0}^{n-2}A_1^k \partial((J+T)^{n-2-k}C_2) \equiv \sum_{k=0}^{n-2}A_1^k \partial\left(J^{n-2-k}C_2+\textstyle\sum_{m=0}^{n-3-k}J^mTJ^{n-3-k-m}C_2 \right) \mod\bm\hat{t}^2.
\end{equation}
The second part of Equation \eqref{Eq:a1-k} equals
\begin{align}\label{eq:aux-20}
\sum_{k=0}^{n-2}A_1^k \partial\sum_{m=0}^{n-3-k} J^mTJ^{n-3-k-m}C_2 &\equiv \sum_{k=0}^{n-2}J^k\partial \sum_{m=0}^{n-3-k}\sum_{\ell=m}^{n-1}\bm\hat{\mu}_{k+m+2}\bm\hat{t}_{n+m-\ell}\nabla^{\ell}s  \mod \bm\hat{t}^2\nonumber\\
&=\sum_{k=0}^{n-3}(k+1)\sum_{\ell=k}^{n-1}\partial(\bm\hat{\mu}_{k+2}\bm\hat{t}_{n+k-\ell})\nabla^\ell s,
\end{align}
where we reindexed by $k\mapsto m+k$.
The first part of Equation \eqref{Eq:a1-k} gives
\begin{align}\label{eq:aux-30}
\sum_{k=0}^{n-2}A_1^k\partial(J^{n-2-k}C_2) &\equiv \sum_{k=0}^{n-2} \sum_{\ell=n-2-k}^{n-1}A_1^k(\partial\bm\hat{\mu}_{\ell+k+2-n}+\partial(\bm\hat{\mu}_n\bm\hat{t}_{2n-\ell-k-2}))\nabla^{\ell}s \nonumber\\
&\equiv \sum_{k=0}^{n-2}\sum_{\ell=n-2-k}^{n-1}\left(J^k+\textstyle\sum_{m=0}^{k-1}J^mTJ^{k-1-m}\right)(\partial\bm\hat{\mu}_{\ell+k+2-n}+\partial(\bm\hat{\mu}_n\bm\hat{t}_{2n-\ell-k-2}))\nabla^{\ell}s \nonumber\\
&\equiv (n-1)\partial(\bm\hat{\mu}_n\bm\hat{t}_n)\nabla^{n-2}s+(n-1)(\partial(\bm\hat{\mu}_{n-1}\bm\hat{t}_n)+\partial\bm\hat{\mu}_1)\nabla^{n-1}s \nonumber\\
&\;\;\; + \sum_{k=0}^{n-3}(n-2-k)\sum_{\ell=k}^{n-1}\bm\hat{t}_{n+k-\ell}\partial\bm\hat{\mu}_{k+2} \nabla^\ell s \mod \partial^2, \bm\hat{t}^2.
\end{align}
The conclusion of Equations \eqref{eq:aux-10},\eqref{eq:aux-20},\eqref{eq:aux-30} is that for $0\leq \ell\leq n-3$, the coefficient in front of $\nabla^{\ell}s$ of $\nabla^{n-1}C_2$, modulo $\partial^2$ and $\bm\hat{t}^2$, is given by
\begin{align}\label{eq:aux-40}
&\sum_{k=0}^\ell (\bm\hat{t}_{n+k-\ell}(\bm\hat{\mu}_{k+1}+\partial\bm\hat{\mu}_{k+2})+(k+1)\partial(\bm\hat{\mu}_{k+2}\bm\hat{t}_{n+k-\ell})+(n-2-k)\bm\hat{t}_{n+k-\ell}\partial\bm\hat{\mu}_{k+2}) \nonumber\\
&= \sum_{k=0}^\ell (\bm\hat{t}_{n+k-\ell}\bm\hat{\mu}_{k+1}+n\bm\hat{t}_{n+k-\ell}\partial\bm\hat{\mu}_{k+2}+(k+1)\bm\hat{\mu}_{k+2}\partial\bm\hat{t}_{n+k-\ell}).
\end{align}
For $\ell=n-2$, we get the same expression since the term $(n-1)\partial(\bm\hat{\mu}_n\bm\hat{t}_n)$ of Equation \eqref{eq:aux-30} corresponds to the missing term for $k=n-2$ in Equation \eqref{eq:aux-20}.

Finally, we can combine Equations \eqref{Eq:xi-eq-aux}, \eqref{Eq:aux.3}, \eqref{eq:aux-40} and compare coefficients in front of $\nabla^{n-\ell}s$ to get:
\begin{align}\label{Eq:aux-50}
\xi_\ell+(n+1-\ell)\partial\xi_{\ell-1} &\equiv -\bar\partial\bm\hat{t}_\ell -\sum_{k=1}^{n-1}\bm\hat{t}_{n+1-k}(\xi_{k+\ell-1-n}+\partial\xi_{k+\ell-2-n}+\bm\hat{\mu}_{n-\ell-k+2}+(k-1)\partial\bm\hat{\mu}_{n-\ell-k+3})  \nonumber\\
& \;\;\; +\sum_{k=0}^{n-\ell}(\bm\hat{t}_{k+\ell}\bm\hat{\mu}_{k+1}+n\bm\hat{t}_{k+\ell}\partial\bm\hat{\mu}_{k+2}+(k+1)\bm\hat{\mu}_{k+2}\partial\bm\hat{t}_{k+\ell})\mod \partial^2,\bm\hat{t}^2,
\end{align}
where we used the notation $\xi_k=\bm\hat{\mu}_{k-1}=0$ for all $k\leq 1$.
We see that apart from the initial $\xi_\ell$, only terms $\xi_{\ell'}$ with $\ell'<\ell$ appear and those terms come with a derivative or a factor of $\bm\hat{t}$. 

We finish the proof using strong induction on $\ell$. The case of $\xi_2$ is directly given by Equation \eqref{Eq:aux-50}. Then, to prove the proposition for $\xi_\ell$, we can assume the formula for $\xi_{\ell'}$ with $\ell'<\ell$. In particular, we see that $\xi_{\ell'}\equiv 0 \mod \bm\hat{t}$ and also $\xi_{\ell'}\equiv 0 \mod\partial$. Therefore, we can remove all $\xi_{\ell'}$ with $\ell'<\ell$ from Equation \eqref{Eq:aux-50}. The remaining part is
$$\xi_\ell\equiv -\bar\partial\bm\hat{t}_\ell+\sum_{k=0}^{n-\ell}((\ell+k)\bm\hat{t}_{k+\ell}\partial\bm\hat{\mu}_{k+2}+(k+1)\bm\hat{\mu}_{k+2}\partial\bm\hat{t}_{k+\ell}) \mod \partial^2,\bm\hat{t}^2,$$
which is the formula we want to prove.
\end{proof}

\end{document}